\newtheorem{definition}{Definition}[section]
\newtheorem{theorem}{Theorem}[section]
\newtheorem{lemma}{Lemma}[section]
\numberwithin{equation}{section}
\begin{document}

\title{\bf The Gaussian-Minkowski problem for $C$-pseudo-cones\footnote{This paper was supported by the Excellent Graduate Training Program of SNNU (No. LHRCCX23142).}}
\author{Xudong Wang\thanks{E-mail: xdwang@snnu.edu.cn. School of Mathematics and Statistics, Shaanxi Normal University, Xi'an, 710119, China.}, Tingting Xiang\thanks{E-mail: xiangtt@snnu.edu.cn. School of Mathematics and Statistics, Shaanxi Normal University, Xi'an, 710119, China.}}

\date{}
\maketitle

\begin{abstract}

The Gaussian surface area measures for $C$-pseudo-cones are studied in this paper. Using the variational arguments and the approximation methods of Schneider, we obtain the existence of solutions to the Gaussian-Minkowski problem for $C$-pseudo-cones with small co-volume.

\vskip 2mm \noindent {\bf Mathematics Subject Classification 2020.\rm} 52A20, 52A30, 52A40.

\vskip 2mm \noindent \textbf{Keywords.} Gaussian-Minkowski problem, $C$-pseudo-cone.

\end{abstract}

\section{Introduction}

The classical Minkowski problem asks for some convex body in the $n$-dimensional Euclidean space $\mathbb{R}^n$ such that its surface area measure is a given finite Borel measure $\mu$ on the unit sphere $\mathbb{S}^{n-1}$. This was solved by Minkowski \cite{Minkowski-konvexenPolyeder,Minkowski-Volumen} in the case of discrete data and continuous density. Aleksandrov \cite{Aleksandrov-surfaceareameasure} and Fenchel-Jessen \cite{FenchelJessen} solved the general case of measure, respectively. Given a smooth function $f$ on $\mathbb{S}^{n-1}$, then the smooth version of the Minkowski problem is a prescribed Gauss curvature problem, namely the following Monge-Amp\`{e}re equation on $\mathbb{S}^{n-1}$:
\begin{equation*}
\det(\nabla^2h+hI)=f,
\end{equation*}
where $\nabla^2$ is the covariant Hessian operator on $\mathbb{S}^{n-1}$ and $I$ is the identity matrix. On this regularity of the Minkowski problem, one can refer to Lewy \cite{Lewy-existenceRiemannianmetric,Lewy-differentialgeometricinlarge}, Nirenberg \cite{Nirenberg}, Pogorelov \cite{Pogorelovbook}, Cheng and Yau \cite{ChengYau-RegularityMinkowskiProblem}, Caffarelli \cite{Caffarelli-InteriorW2pEstimates,Caffarelli-viscositysolutions} and so on.

Recent years, there appear many Minkowski type problems. Such as, $L_p$ Minkowski problem \cite{Lutwak-Lp1}, Orlicz Minkowski problem \cite{Haberl-Lutwak-Yang-Zhang-Orlicz_Minkowski_problem}, dual Minkowski problem \cite{Huang-Lutwak-Yang-Zhang-dual_curvature_measures}, Gauss image problem \cite{Boroczky-Lutwak-Yang-Zhang-Zhao-The_Gauss_Image_Problem}, Chord Minkowski problem \cite{Lutwak-Xi-Yang-Zhang-Chord_measures}, Gaussian-Minkowski problem \cite{Huang-Xi-Zhao-The_Gaussian_Minkowski_problem}, Minkowski problem for electrostatic capacity \cite{Jerison-A_Minkowski_problem_capacity}, torsional rigidity \cite{Colesanti-Fimiani-The_Minkowski_problem_for_torsional_rigidity}, harmonic measure \cite{Jerison-Prescribing_harmonic_measure}, and their various deformations and combinations.

In this paper, we consider the Minkowski type problem for unbounded closed convex sets in the Gaussian probability space. Given a pointed closed convex cone $C$, Schneider \cite{Schneider-A_Brunn_Minkowski_theory,Schneider-Minkowski_type_theorems} firstly studied the Minkowski problem for the surface area measures and the cone-volume measures of unbounded closed convex sets in $C$. Later, many Minkowski type problems for unbounded closed convex sets in Lebesgue measure space \cite{Yang-Ye-Zhu-On_the_Lp,Li-Ye-Zhu-The_dual_Minkowski_problem,Chen-Tu-Regularities,Ai-Yang-Ye-The_L_p_dual_Minkowski_problem,Semenov-Zhao,Zhang-The_Minkowski_problem,Schneider-Gauss_image_problem,Schneider-Pseudo_cones} were also solved. In \cite{Schneider-A_weighted_Minkowski_theorem,Schneider-Weighted_cone_volume_measures_of_pseudo_cones}, Schneider studied the weighted Minkowski problem for $C$-pseudo-cones (see Section 2 for the definition), where the weighted functions have homogeneity and the weighted surface area measures for all $C$-pseudo-cones are finite. Following the same finiteness idea, the Minkowski type problems for $C$-pseudo-cones in the Gaussian probability space are attractive.

The setting of this paper is the $n$-dimensional Euclidean sapce $\mathbb{R}^n$ with the standard inner product $\langle\cdot,\cdot\rangle$ and norm $|\cdot|$. We denote by $\mathbb{S}^{n-1}$ the unit sphere in $\mathbb{R}^n$. We also denote by $\text{{\em int}}$ and $\partial$ the interior operator and the boundary operator, respectively. We fix a $n$-dimensional closed convex cone $C$ in $\mathbb{R}^n$ and suppose that $C$ is pointed, i.e., $C\cap(-C)=\{o\}$. The polar cone of $C$ is defined by
\begin{equation*}
C^\circ=\{x\in\mathbb{R}^n\,|\,\langle x,y\rangle\leqslant0,\ \forall\ y\in C\},
\end{equation*}
and we denote $\Omega_C=\text{{\em int}}\,C\cap\mathbb{S}^{n-1}$, $\Omega_{C^\circ}=\text{{\em int}}\,C^\circ\cap\mathbb{S}^{n-1}$.

Let $E$ be a $C$-pseudo-cone. The Gaussian surface area measure of $E$ is defined by
\begin{equation*}
S_{\gamma_n}(E,\omega)=\frac{1}{(2\pi)^\frac{n}{2}}\int_{\boldsymbol{\nu}^*_E(\omega)}e^{-\frac{|x|^2}{2}}\,d\mathcal{H}^{n-1}(x)
\end{equation*}
for any Borel set $\omega\subset\Omega_{C^\circ}$. Here $\boldsymbol{\nu}^*_E$ represents the inverse Gauss image of $E$ and $\mathcal{H}^k$ represents the $k$-dimensional Hausdorff measure in $\mathbb{R}^n$. In Lemma \ref{Lemma-finiteness-Gauss-surface-area-measure}, we verify the finiteness of the Gaussian surface area measures for all $C$-pseudo-cones. Moreover, the Gaussian co-volume of $E$ is defined by
\begin{equation*}
\overline{\gamma}_n(E)=\gamma_n(C\setminus E)=\frac{1}{(2\pi)^\frac{n}{2}}\int_{C\setminus E}e^{-\frac{|x|^2}{2}}\,d\mathcal{H}^n(x),
\end{equation*}
where $\gamma_n$ is the Gaussian probability measure in $\mathbb{R}^n$. Naturally, the Gaussian-Minkowski problem for $C$-pseudo-cones can be stated as the following:
\vskip 0.3cm
{\em Given a nonzero finite Borel measure $\mu$ on $\Omega_{C^\circ}$, is there a $C$-pseudo-cone $E$ such that the Gaussian surface area measure of $E$ is $\mu$?}
\vskip 0.3cm

In the case of convex bodies, the Gaussian-Minkowski problem has been studied by Huang, Xi and Zhao \cite{Huang-Xi-Zhao-The_Gaussian_Minkowski_problem}. As its unbounded analogue, the existence of some solutions to the Gaussian-Minkowski problem for $C$-pseudo-cones is confirmed. Here we adopt the methods of Schneider \cite{Schneider-A_Brunn_Minkowski_theory,Schneider-A_weighted_Minkowski_theorem} and some technologies in \cite{Huang-Xi-Zhao-The_Gaussian_Minkowski_problem}. By establishing some uniform estimates under the restriction of small co-volume, we obtain the following result:
\begin{theorem}\label{Main-Theorem}
Let $\mu$ be a nonzero Borel measure on $\Omega_{C^\circ}$, then there exists a $C$-pseudo-cone $E$ with $\overline{\gamma}_n(E)\leqslant\frac{1}{2}\gamma_n(C)$ such that
\begin{equation*}
S_{\gamma_n}(E,\cdot)=\mu
\end{equation*}
if and only if $\mu$ is finite.
\end{theorem}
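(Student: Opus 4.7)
The ``only if'' direction is immediate from Lemma \ref{Lemma-finiteness-Gauss-surface-area-measure}, which already asserts that $S_{\gamma_n}(E,\cdot)$ is finite for every $C$-pseudo-cone $E$. The bulk of the work is the ``if'' direction, and my plan is the classical two-step existence scheme: first construct a solution when $\mu$ is a discrete measure via a constrained variational principle, then pass to a general finite $\mu$ using a Schneider-type approximation.

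For a discrete measure $\mu=\sum_{i=1}^{N}c_i\delta_{u_i}$ with $u_i\in\Omega_{C^\circ}$ and $c_i>0$, I would fix a target co-volume $\tau\in(0,\tfrac{1}{2}\gamma_n(C)]$ and consider the optimization
\begin{equation*}
\inf\left\{\int_{\Omega_{C^\circ}}(-h_E)\,d\mu:E\text{ a polytopal }C\text{-pseudo-cone with outer normals in }\{u_i\},\ \overline{\gamma}_n(E)=\tau\right\}.
\end{equation*}
The existence of a minimizer $E^*_\tau$ needs uniform two-sided estimates on its support function: the upper estimate (preventing recession of $E^*_\tau$ toward infinity) should follow from $\tau<\gamma_n(C)$, while the lower estimate (preventing collapse onto $\partial C$) should follow from the pointedness of $C$ together with the positivity of $\mu$. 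Once the minimizer is secured, differentiating $\overline{\gamma}_n$ along a polytopal perturbation of the support function produces the Gaussian surface area measure as its derivative, and a Lagrange multiplier argument yields a constant $\lambda(\tau)>0$ with $S_{\gamma_n}(E^*_\tau,\cdot)=\lambda(\tau)\mu$.

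The hardest step, and the reason the co-volume bound enters the statement, is eliminating the multiplier $\lambda(\tau)$. Since the Gaussian measure is \emph{not} homogeneous under dilation of $E$, the usual rescaling trick from the classical or $L_p$ Minkowski theories is unavailable. My strategy is to use $\tau$ as a free parameter: prove that $\tau\mapsto\lambda(\tau)$ is continuous on $(0,\tfrac{1}{2}\gamma_n(C)]$, analyse its limiting behaviour as $\tau\to 0^+$ and as $\tau\uparrow\tfrac{1}{2}\gamma_n(C)$, and apply the intermediate value theorem to locate $\tau^*$ with $\lambda(\tau^*)=1$. The restriction $\tau\leqslant\tfrac{1}{2}\gamma_n(C)$ mirrors the half-measure hypothesis of Huang--Xi--Zhao \cite{Huang-Xi-Zhao-The_Gaussian_Minkowski_problem} and corresponds to the regime in which the relevant monotonicity coming from log-concavity of the Gaussian density is available; this is exactly where I expect the main technical difficulty to concentrate.

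For the general case, approximate a finite nonzero Borel measure $\mu$ weakly by discrete measures $\mu_k$ with $\mu_k(\Omega_{C^\circ})\to\mu(\Omega_{C^\circ})$, each supported on a finite subset of $\Omega_{C^\circ}$. The discrete step produces $C$-pseudo-cones $E_k$ with $S_{\gamma_n}(E_k,\cdot)=\mu_k$ and $\overline{\gamma}_n(E_k)\leqslant\tfrac{1}{2}\gamma_n(C)$. Uniform control on the total mass $\mu_k(\Omega_{C^\circ})$ and on the co-volume, combined with Schneider's compactness framework for $C$-pseudo-cones, should allow extraction of a subsequence whose support functions converge pointwise on $\Omega_{C^\circ}$ to the support function of a $C$-pseudo-cone $E$. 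Weak continuity of $S_{\gamma_n}(\cdot,\cdot)$ under this convergence will follow from weak continuity of the classical surface area measure together with dominated convergence against the bounded continuous Gaussian density, and lower semicontinuity of $\overline{\gamma}_n$ preserves the co-volume bound in the limit, yielding $S_{\gamma_n}(E,\cdot)=\mu$ with $\overline{\gamma}_n(E)\leqslant\tfrac{1}{2}\gamma_n(C)$ and completing the argument.
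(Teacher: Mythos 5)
Your scheme leaves the decisive step unproved: the elimination of the Lagrange multiplier. Fixing the co-volume $\overline{\gamma}_n(E)=\tau$ and minimizing $\int(-h_E)\,d\mu$ only yields $S_{\gamma_n}(E^*_\tau,\cdot)=\lambda(\tau)\mu$, and your plan to remove $\lambda(\tau)$ (continuity in $\tau$, asymptotics as $\tau\to0^+$ and $\tau\uparrow\tfrac12\gamma_n(C)$, intermediate value theorem) is only announced, not carried out; nothing in the proposal shows that $\lambda(\tau)$ ever takes the value $1$ on $(0,\tfrac12\gamma_n(C)]$, and computing the endpoint behaviour of $\lambda(\tau)$ in the non-homogeneous Gaussian setting is precisely the difficulty you cannot defer. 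The paper sidesteps this entirely: it maximizes the functional $\mathcal{F}(f)=\int_\omega f\,d\mu-\tfrac{1}{\alpha}\overline{\gamma}_n([f])^{\alpha}$ (with $\alpha=1$ for Theorem \ref{Main-Theorem}) over $f\in\mathcal{C}^+(\omega)$ subject to $\overline{\gamma}_n([f])\leqslant\tfrac12\gamma_n(C)$, so that by the variational formula of Lemma \ref{Lemma-Variational-formula} the Euler--Lagrange equation is directly $\overline{\gamma}_n(K)^{\alpha-1}S_{\gamma_n}(K,\cdot)=\mu$, i.e.\ $S_{\gamma_n}(K,\cdot)=\mu$ when $\alpha=1$, with no multiplier at all (Theorem \ref{Theorem-solution-compact-set}); the bound $\overline{\gamma}_n\leqslant\tfrac12\gamma_n(C)$ enters only through the constraint set and the upper estimate of Lemma \ref{Lemma-Upper-bound-estimate}, not through any log-concavity monotonicity as in the convex-body case.

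Your second stage is also underdeveloped for the unbounded setting. The paper does not approximate by discrete measures; it restricts $\mu$ to an exhausting sequence of compact sets $\omega_i\subset\Omega_{C^\circ}$, solves in $\mathcal{K}(C,\omega_i)$, and then must (a) get a uniform lower bound $b(K_i)\geqslant\lambda$ from $S_{\gamma_n}(K_i,\omega_1)\geqslant\mu(\omega_1)>0$ on a \emph{fixed} compact $\omega_1$ via Lemma 9 of \cite{Schneider-Pseudo_cones}, and (b) identify the limit measure locally, using Lemmas 13 and 2 of \cite{Schneider-A_weighted_Minkowski_theorem} to reduce to $C$-determined sets where the weak continuity of Lemma \ref{Lemma-weak-continuity-Gauss-surface-area-measure} applies. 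With discrete $\mu_k$ whose supports move inside $\Omega_{C^\circ}$, your appeal to ``weak continuity of the classical surface area measure together with dominated convergence'' does not suffice: the classical surface area measure of a $C$-pseudo-cone is infinite near $\partial\Omega_{C^\circ}$, convergence is only local (in the sense of Schneider), mass can escape toward the boundary of $\Omega_{C^\circ}$, and you have not explained how to obtain the uniform lower bound on $b(E_k)$ needed for Lemma \ref{Lemma-Schneider-selection-theorem}. These points are fixable in principle, but as written the approximation argument, like the multiplier step, is a plan rather than a proof.
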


The organization of this paper is as follows. In Section 2, we introduce some tools regarding $C$-pseudo-cones. In Section 3, some properties of the Gaussian surface area measures for $C$-pseudo-cones are studied. Thanks to the uniform estimates in Section 4, the existence of some solutions on compact sets is established by the variational arguments. Finally, the proof of Theorem \ref{Main-Theorem} is given in Section 5.

\section{Preliminaries}

We denote by $B^n$ the unit ball in $\mathbb{R}^n$, and denote by $o$ the origin of $\mathbb{R}^n$. If we choose $\mathfrak{u}\in\Omega_C\cap(-\Omega_{C^\circ})$, then $\langle\mathfrak{u},x\rangle>0$ for any $x\in C\setminus\{o\}$, see \cite{Schneider-Pseudo_cones}. We set $C(t)=\{x\in C\,|\,\langle x,\mathfrak{u}\rangle=t\}$ and $C^-(t)=\{x\in C\,|\,\langle x,\mathfrak{u}\rangle\leqslant t\}$ for $t>0$.

Let $o\notin E\subset C$ be a nonempty closed convex set, then $E$ is called a pseudo-cone if
\begin{equation*}
\lambda x\in E\ \,\text{for any}\ \lambda\geqslant1\ \text{and}\ x\in\,E.
\end{equation*}
Moreover, $E$ is called a $C$-pseudo-cone if ${\rm rec}\,E=C$, where ${\rm rec}\,E$ is the recession cone of $E$ defined by
\begin{equation*}
{\rm rec}\,E=\{x\in\mathbb{R}^n\,|\,E+x\subset E\}.
\end{equation*}
We denote by $\mathcal{PC}(C)$ the set of all $C$-pseudo-cones.

Let $E\in\mathcal{PC}(C)$. If $C\setminus E$ has finite volume, then $E$ is called a $C$-close set and $C\setminus E$ is called a $C$-coconvex set. In particular, a $C$-close set $E$ is called a $C$-full set if $C\setminus E$ is bounded. The support function of $E$ is defined by
\begin{equation*}
h_E(v)=\sup\{\langle x,v\rangle\,|\,x\in E\},\,v\in\Omega_{C^{\circ}}.
\end{equation*}
Note that $-\infty<h_E(v)<0$ for any $v\in\Omega_{C^{\circ}}$, one can define $\overline{h}_E=-h_E$, so that $\overline{h}_E$ is a positive continuous function on $\Omega_{C^{\circ}}$. The radial function of $E$ is defined by
\begin{equation*}
\rho_E(u)=\inf\{r>0\,|\,ru\in E\},\,u\in\Omega_C.
\end{equation*}

Let $\omega\subset\Omega_{C^\circ}$ be a nonempty compact set and $K\in\mathcal{PC}(C)$. $K$ is called a $C$-determined set by $\omega$ if
\begin{equation*}
K=C\cap\bigcap_{v\in\omega}H^-(v,h_K(v)).
\end{equation*}
We denote by $\mathcal{K}(C,\omega)$ the class of all $C$-determined sets by $\omega$. Note that a $C$-determined set by a compact set is also a $C$-full set. Moreover, one can define a $C$-determined set $K^{(\omega)}$ of a $C$-pseudo-cone $K$ by
\begin{equation*}
K^{(\omega)}=C\cap\bigcap_{v\in\omega}H^-(v,h_K(v)).
\end{equation*}

Denote by $\widetilde{\partial}E=\partial E\cap\text{{\em int}}\,C$. We use $\mathfrak{B}(X)$ to represent the class of all Borel sets in $X$. For $E\in\mathcal{PC}(C)$ and $\beta\in\mathfrak{B}(\widetilde{\partial}E)$, the Gauss image of $E$ is defined by
\begin{equation*}
\boldsymbol{\nu}_E(\beta)=\{v\in\Omega_{C^\circ}\,|\,\mbox{there exists}\ x\in\beta\ \mbox{such that}\ \langle x,v\rangle=h_E(v)\},
\end{equation*}
and for $\omega\in\mathfrak{B}(\Omega_{C^\circ})$, the inverse Gauss image of $E$ is defined by
\begin{equation*}
\boldsymbol{\nu}^*_E(\omega)=\{x\in\widetilde{\partial}E\,|\,\mbox{there exists}\ v\in\omega\ \mbox{such that}\ \langle x,v\rangle=h_E(v)\}.
\end{equation*}

Let $\{E_i\,|\,i\in\mathbb{N}_0\}\subset\mathcal{PC}(C)$, we say that $E_i$ converges to $E_0$ as $i\rightarrow+\infty$ in the sense of Schneider if there exists $t_0>0$ such that $E_i\cap C^-(t_0)\neq\emptyset$ for all $i\geqslant0$ and
\begin{equation*}
E_i\cap C^-(t)\ \mbox{converges to}\ E_0\cap C^-(t)\ \mbox{as}\ i\rightarrow+\infty
\end{equation*}
with respect to the Hausdorff metric for all $t\geqslant t_0$. We write
\begin{equation*}
b(E)=d(o,E)=\inf\{|x|\,|\,x\in E\},\ \text{for}\ E\in\mathcal{PC}(C).
\end{equation*}

\begin{lemma}[Schneider selection theorem, see \cite{Schneider-Pseudo_cones}]\label{Lemma-Schneider-selection-theorem}
Let $\{E_i\,|\,i\in\mathbb{N}_0\}\subset\mathcal{PC}(C)$. If there exist two positive constants $\lambda$ and $\Lambda$ such that
\begin{equation*}
\lambda<b(E_i)<\Lambda,\ \forall\ i\geqslant1,
\end{equation*}
then $\{E_i\}_{i=1}^{+\infty}$ has a subsequence that converges to a $C$-pseudo-cone (in the sense of Schneider).
\end{lemma}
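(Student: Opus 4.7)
The plan is to reduce to the classical Blaschke selection theorem by truncating each $E_i$ at levels $C^-(t)$, extracting convergent subsequences on a countable exhaustion of levels via Cantor's diagonal procedure, and then verifying that the resulting limit is a genuine member of $\mathcal{PC}(C)$.

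First I would show that $C^-(t)$ is compact for every $t>0$. Since $\mathfrak{u}\in\text{{\em int}}\,C\cap(-\Omega_{C^\circ})$, a standard compactness argument on the closed cross-section $\overline{\Omega}_C$ produces a constant $c>0$ such that $\langle x,\mathfrak{u}\rangle\geqslant c|x|$ for every $x\in C$, which forces $C^-(t)\subset(t/c)B^n$. Next, $b(E_i)<\Lambda$ yields a point $x_i\in E_i$ with $\langle x_i,\mathfrak{u}\rangle\leqslant|x_i|<\Lambda$, so choosing $t_0=\Lambda$ guarantees $E_i\cap C^-(t_0)\neq\emptyset$ for every $i\geqslant 1$.

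I would then pick an exhausting sequence $t_0<t_1<t_2<\cdots\to+\infty$. For each fixed $j$, the sets $E_i\cap C^-(t_j)$ are nonempty convex bodies inside the compact set $C^-(t_j)$, so Blaschke's selection theorem produces a subsequence converging in the Hausdorff metric to a nonempty compact convex $F_j\subset C^-(t_j)$. A Cantor diagonal argument then extracts one subsequence (still denoted $\{E_i\}$) along which $E_i\cap C^-(t_j)\to F_j$ for every $j$. Monotonicity of the truncations gives $F_0\subset F_1\subset\cdots$ together with $F_j\cap C^-(t_k)=F_k$ for $k\leqslant j$, and I would set $E_0:=\overline{\bigcup_j F_j}$, a closed convex subset of $C$.

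The heart of the argument is verifying $E_0\in\mathcal{PC}(C)$ and upgrading convergence to all $t\geqslant t_0$. The lower bound $b(E_i)>\lambda$ propagates to give $d(o,F_j)\geqslant\lambda$ for each $j$, so $o\notin E_0$. For the recession cone, I would pick $y_i\in E_i$ with $|y_i|<\Lambda$; since ${\rm rec}\,E_i=C$, we have $y_i+C\subset E_i$, and after passing to a further subsequence with $y_i\to y_0$, a closedness argument applied to $y_i+v\in E_i\cap C^-(t_j)$ for fixed $v\in C$ and $j$ sufficiently large yields $y_0+v\in F_j\subset E_0$. Hence $C\subset{\rm rec}\,E_0$, while $E_0\subset C$ forces ${\rm rec}\,E_0\subset C$, giving equality. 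Convergence for an arbitrary $t\geqslant t_0$ then follows by sandwiching $t$ between consecutive $t_j$'s and invoking the established Hausdorff convergence at those levels. The step I expect to demand the most care is the identification ${\rm rec}\,E_0=C$, since recession cones interact subtly with Hausdorff limits on bounded truncations, and one must construct explicit rays inside $E_0$ to rule out that ${\rm rec}\,E_0$ collapses to a proper subcone of $C$.
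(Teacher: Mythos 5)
The paper itself offers no proof of this lemma: it is quoted directly from Schneider's paper on pseudo-cones \cite{Schneider-Pseudo_cones}, so your argument is compared against the standard proof there, which it reconstructs along essentially the same lines (compactness of the truncations $C^-(t)$, Blaschke selection at a countable exhaustion of levels, Cantor diagonalization, identification of the limit). The skeleton is sound: the bound $\langle x,\mathfrak{u}\rangle\geqslant c|x|$ on $C$, the nonemptiness $E_i\cap C^-(\Lambda)\neq\emptyset$ from $b(E_i)<\Lambda$, the lower bound $d(o,F_j)\geqslant\lambda$, and the recession-cone step are all fine; for the latter, note that $y_0+C\subset E_0$ yields $C\subset{\rm rec}\,E_0$ by the standard fact that for a closed convex set a single point emitting all rays of $C$ suffices, and ${\rm rec}\,E_0\subset{\rm rec}\,C=C$ gives equality. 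One omission: you never check the defining pseudo-cone property $\lambda x\in E_0$ for $\lambda\geqslant1$; it follows at once from $E_0\subset C={\rm rec}\,E_0$, since $\lambda x=x+(\lambda-1)x\in E_0+{\rm rec}\,E_0\subset E_0$, but it should be said.

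The two steps you pass over most quickly are in fact the ones needing an argument. Neither the consistency relation $F_j\cap C^-(t_k)=F_k$ nor the upgrade to arbitrary levels $t\geqslant t_0$ follows from "monotonicity" or "sandwiching" alone: Hausdorff convergence is not in general preserved under intersection with a closed half-space, and the possible failure is exactly at points of $F_j$ lying on the hyperplane $C(t_k)$. What rescues the argument is the standard continuity result for intersections with half-spaces (see the corresponding lemma in \cite{Schneider-book}): if $K_i\rightarrow K$ and the limit $K$ meets the \emph{interior} of the half-space $H$, then $K_i\cap H\rightarrow K\cap H$. This hypothesis is available here for every level $t>\Lambda$, because $F_0\neq\emptyset$ lies in $C^-(\Lambda)$ and $F_0\subset F_j$ for all $j$, so each $F_j$ contains points strictly below any such level; equivalently one can run the explicit segment argument joining a point of $E_i\cap C^-(\Lambda)$ to the approximating points and taking the crossing point with $C(t)$. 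At the bottom level $t=\Lambda$ itself the hypothesis can fail, but this is harmless: the definition of convergence in the sense of Schneider only requires \emph{some} starting level, so you simply begin at $t_1>\Lambda$. With these justifications inserted, your proof is complete and agrees with the standard one.
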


\section{Gaussian surface area measures for $C$-pseudo-cones}

In this section, we introduce some definitions and related properties of the Gaussian surface area measures for $C$-pseudo-cones.
\begin{definition}
Let $E\in\mathcal{PC}(C)$. The Gaussian surface area measure of $E$ is defined by
\begin{equation*}
S_{\gamma_n}(E,\omega)=\frac{1}{(2\pi)^\frac{n}{2}}\int_{\boldsymbol{\nu}^*_E(\omega)}e^{-\frac{|x|^2}{2}}\,d\mathcal{H}^{n-1}(x)
\end{equation*}
for any $\omega\in\mathfrak{B}(\Omega_{C^\circ})$. And the Gaussian co-volume of $E$ is defined by
\begin{equation*}
\overline{\gamma}_n(E)=\frac{1}{(2\pi)^\frac{n}{2}}\int_{C\setminus E}e^{-\frac{|x|^2}{2}}\,d\mathcal{H}^n(x).
\end{equation*}
\end{definition}

Let $E\in\mathcal{PC}(C)$, $x_0\in\text{int}\,E$ and $t_0=\langle x_0,\mathfrak{u}\rangle$. Define $t_i=t_0+i$ for $i\in\mathbb{N}$, and the following sets
\begin{equation*}
\left\{
\begin{aligned}
E_i&=\{x\in E\,|\,t_i\leqslant\langle x,\mathfrak{u}\rangle\leqslant t_{i+1}\},\ \partial^*E_i=\{x\in\partial E_i\,|\,t_i<\langle x,\mathfrak{u}\rangle<t_{i+1}\},\\
\overline{E_i}&=E\cap C(t_{i+1})+\{\lambda\mathfrak{u}\,|\,-1\leqslant\lambda\leqslant0\},\ \partial^*\overline{E_i}= \{x\in\partial\overline{E_i}\,|\,t_i<\langle x,\mathfrak{u}\rangle<t_{i+1}\},\\
\underline{E_i}&=E\cap C(t_i)+\{\lambda\mathfrak{u}\,|\,0\leqslant\lambda\leqslant1\},\ \partial^*\underline{E_i}= \{x\in\partial\underline{E_i}\,|\,t_i<\langle x,\mathfrak{u}\rangle<t_{i+1}\},
\end{aligned}
\right.
\end{equation*}
where the upper cylinders $\overline{E_i}$, lower cylinders $\underline{E_i}$, and convex bodies $E_i$ satisfy
\begin{equation*}
\underline{E_i}\subset E_i\subset\overline{E_i}.
\end{equation*}
Thanks to this partition, we can prove the finiteness of the Gaussian surface area measures for $C$-pseudo-cones.
\begin{lemma}\label{Lemma-finiteness-Gauss-surface-area-measure}
Let $E\in\mathcal{PC}(C)$, then $S_{\gamma_n}(E,\Omega_{C^\circ})<+\infty$.
\end{lemma}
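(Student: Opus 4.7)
My plan is to exploit the slab decomposition $\{E_i, \overline{E_i}, \underline{E_i}\}_{i \geq 0}$ introduced just before the statement: bound the Gaussian surface integral over each slab by comparing $E_i$ to the circumscribed cylinder $\overline{E_i}$, and then show that the Gaussian factor $e^{-|x|^2/2}$ annihilates the (at worst polynomial) growth of the lateral area.

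First I would reduce to an integral over the relative boundary. Since $\boldsymbol{\nu}^*_E(\Omega_{C^\circ}) \subset \widetilde{\partial} E$, and since
\[
\widetilde{\partial} E \;=\; \bigsqcup_{i=0}^{\infty} \partial^* E_i
\]
up to an $\mathcal{H}^{n-1}$-null set (the cross-sections $E \cap C(t_i)$ are $(n-1)$-dimensional subsets of the hyperplanes $\{\langle\cdot,\mathfrak{u}\rangle = t_i\}$ which are shared boundaries of consecutive slabs), the definition of the Gaussian surface area measure gives
\[
S_{\gamma_n}(E,\Omega_{C^\circ}) \;\le\; \frac{1}{(2\pi)^{n/2}} \sum_{i=0}^{\infty} \int_{\partial E_i} e^{-|x|^2/2}\,d\mathcal{H}^{n-1}(x).
\]
For $x \in E_i$ one has $|x| \ge \langle x,\mathfrak{u}\rangle \ge t_i = t_0 + i$ (since $\mathfrak{u} \in \mathbb{S}^{n-1}$), so the $i$-th summand is at most $e^{-t_i^2/2}\,\mathcal{H}^{n-1}(\partial E_i)$.

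Next I would estimate $\mathcal{H}^{n-1}(\partial E_i)$. Since $E_i \subset \overline{E_i}$ are convex bodies, monotonicity of surface area yields $\mathcal{H}^{n-1}(\partial E_i) \le \mathcal{H}^{n-1}(\partial \overline{E_i})$, and because $\overline{E_i}$ is a right cylinder of height $1$ in the direction $\mathfrak{u}$ with base $E \cap C(t_{i+1})$,
\[
\mathcal{H}^{n-1}(\partial \overline{E_i}) \;=\; 2\,\mathcal{H}^{n-1}\!\bigl(E\cap C(t_{i+1})\bigr) \;+\; \mathcal{H}^{n-2}\!\bigl(\partial(E\cap C(t_{i+1}))\bigr).
\]
Now $E \cap C(t_{i+1}) \subset C \cap C(t_{i+1}) = t_{i+1}(C \cap C(1))$ are convex bodies in the hyperplane $\{\langle\cdot,\mathfrak{u}\rangle = t_{i+1}\}$, so monotonicity of both $(n-1)$-volume and $(n-2)$-perimeter, together with the linear scaling of $C \cap C(t)$ in $t$, produces a constant $M = M(C) > 0$ with $\mathcal{H}^{n-1}(\partial E_i) \le M\,t_{i+1}^{n-1}$.

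Combining the two estimates gives
\[
S_{\gamma_n}(E,\Omega_{C^\circ}) \;\le\; \frac{M}{(2\pi)^{n/2}} \sum_{i=0}^{\infty} (t_0+i+1)^{n-1}\,e^{-(t_0+i)^2/2},
\]
and the right-hand side converges because a Gaussian tail dominates any polynomial factor. I do not expect a serious obstacle: the argument is essentially a Gaussian-weighted repetition of Schneider's finiteness trick for weighted surface area measures, and the only thing requiring some care is the measure-theoretic identification $\widetilde{\partial} E = \bigsqcup_i \partial^* E_i$ (mod null sets) together with the routine monotonicity $\mathcal{H}^{n-1}(\partial E_i) \le \mathcal{H}^{n-1}(\partial \overline{E_i})$ for convex bodies. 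The conceptual content is that the Gaussian weight converts the (potentially unbounded) lateral surface of a $C$-pseudo-cone into a convergent geometric sum.
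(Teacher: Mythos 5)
Your strategy is essentially the paper's: slice $E$ into unit slabs in the direction $\mathfrak{u}$, bound the Gaussian weight on the $i$-th slab by $e^{-(t_0+i)^2/2}$ using $|x|\geqslant\langle x,\mathfrak{u}\rangle\geqslant t_i$, bound the slab's boundary area polynomially in $t_{i+1}$ by comparison with the circumscribed cylinder and the cone's cross-sections, and sum a convergent series. The one sub-step where you genuinely deviate is the area estimate: you bound all of $\mathcal{H}^{n-1}(\partial E_i)$ by $\mathcal{H}^{n-1}(\partial\overline{E_i})=2\mathcal{H}^{n-1}(E\cap C(t_{i+1}))+\mathcal{H}^{n-2}(\partial(E\cap C(t_{i+1})))\leqslant M\,t_{i+1}^{n-1}$, whereas the paper keeps only the lateral part $\partial^*E_i$ and invokes Schneider's annulus estimate $\mathcal{H}^{n-1}((\overline{E_i}\setminus E_i)\cap C(t_i))\leqslant C_1\mathcal{H}^{n-2}(\partial E\cap C(t_i))$ to get a bound of order $t_{i+1}^{n-2}$. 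Since the Gaussian factor kills any polynomial, your cruder bound costs nothing and spares the citation of that external lemma; it is a legitimate simplification.

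There is, however, one statement that is false as written: the identity $\widetilde{\partial}E=\bigsqcup_{i\geqslant0}\partial^*E_i$ up to an $\mathcal{H}^{n-1}$-null set. The slabs start only at height $t_0=\langle x_0,\mathfrak{u}\rangle$ with $x_0\in\mathrm{int}\,E$, so the bottom cap $\partial E\cap C^-(t_0)$, which in general has positive $\mathcal{H}^{n-1}$-measure, is not covered by any $\partial^*E_i$; this is precisely the term $I$ that the paper splits off. The omission is harmless but must be repaired: $\partial E\cap C^-(t_0)$ is contained in the boundary of the bounded convex body $E\cap C^-(t_0)$, hence has finite $\mathcal{H}^{n-1}$-measure, and the Gaussian weight is at most $1$, so it contributes a finite amount. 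A second, cosmetic point: at the heights $t_i$ the boundary $\partial E$ may contain a facet orthogonal to $\mathfrak{u}$, in which case $\partial E\cap C(t_i)$ is not $\mathcal{H}^{n-1}$-null; but since your displayed inequality actually integrates over the full boundaries $\partial E_i$ and you only need an upper bound, the inclusion $\widetilde{\partial}E\setminus C^-(t_0)\subset\bigcup_i\partial E_i$ together with subadditivity suffices, so nothing in the estimate is affected. With the bottom-cap term added, your proof is complete and matches the paper's in substance.
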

\begin{proof}
Without loss of generality, we assume that $E\cap\partial C=\emptyset$, so $\widetilde{\partial}E=\partial E$. Then, the Gaussian surface area of $E$ is divided into
\begin{equation*}
\begin{aligned}
S_{\gamma_n}(E,\Omega_{C^\circ})&=\frac{1}{(2\pi)^\frac{n}{2}}\int_{\widetilde{\partial}E=\partial E}e^{-\frac{|x|^2}{2}}
\,d\mathcal{H}^{n-1}(x)\\
&=\frac{1}{(2\pi)^\frac{n}{2}}\int_{\partial E\cap C^-(t_0)}e^{-\frac{|x|^2}{2}}\,d\mathcal{H}^{n-1}(x)
+\frac{1}{(2\pi)^\frac{n}{2}}\sum_{i=0}^{+\infty}\int_{\partial^*E_i}e^{-\frac{|x|^2}{2}}\,d\mathcal{H}^{n-1}(x)\\
&\triangleq I+J.
\end{aligned}
\end{equation*}

On $\partial^*E_i$, there holds $t_i<\langle x,\mathfrak{u}\rangle\leqslant|x|$. Thus, we have
\begin{equation*}
e^{-\frac{|x|^2}{2}}\leqslant e^{-\frac{(t_0+i)^2}{2}}.
\end{equation*}
Moreover, by an estimate in \cite{Schneider-A_weighted_Minkowski_theorem} (see also Lemma 10 in \cite{Wang-Xu-Zhou-Zhu-Asymptotic_theory} for its proof), there exists a constant $C_1$ which is independent of $i\in\mathbb{N}$, such that
\begin{equation*}
\mathcal{H}^{n-1}((\overline{E_i}\setminus E_i)\cap C(t_i))\leqslant C_1\mathcal{H}^{n-2}(\partial E\cap C(t_i)).
\end{equation*}
Then, there holds
\begin{equation*}
\begin{aligned}
J&=\frac{1}{(2\pi)^\frac{n}{2}}\sum_{i=0}^{+\infty}\int_{\partial^*E_i}e^{-\frac{|x|^2}{2}}\,d\mathcal{H}^{n-1}(x)\\
&\leqslant\frac{1}{(2\pi)^\frac{n}{2}}\sum_{i=0}^{+\infty}e^{-\frac{(t_0+i)^2}{2}}\mathcal{H}^{n-1}(\partial^*E_i)\\
&\leqslant\frac{1}{(2\pi)^\frac{n}{2}}\sum_{i=0}^{+\infty}e^{-\frac{(t_0+i)^2}{2}}\big(\mathcal{H}^{n-2}(\partial E\cap C(t_{i+1}))
+C_1\mathcal{H}^{n-2}(\partial E\cap C(t_i))\big)\\
&\leqslant\frac{1+C_1}{(2\pi)^\frac{n}{2}}\sum_{i=0}^{+\infty}e^{-\frac{(t_0+i)^2}{2}}\mathcal{H}^{n-2}(\partial C\cap C(t_{i+1}))\\
&=\frac{1+C_1}{(2\pi)^\frac{n}{2}}\mathcal{H}^{n-2}(\partial C\cap C(1))\sum_{i=0}^{+\infty}e^{-\frac{(t_0+i)^2}{2}}(t_0+i+1)^{n-2},
\end{aligned}
\end{equation*}
where the second line used the monotonicity of surface area of convex sets. Let $u_i=e^{-\frac{(t_0+i)^2}{2}}(t_0+i+1)^{n-2}$, then
\begin{equation*}
\lim_{i\rightarrow+\infty}\frac{u_{i+1}}{u_i}=0.
\end{equation*}
Then, $\sum_{i=0}^{+\infty}u_i$ converges. This shows $S_{\gamma_n}(E,\Omega_{C^\circ})=I+J<+\infty$.
\end{proof}

Let $\omega\subset\Omega_{C^\circ}$ be a compact set. Denote by $\mathcal{C}(\omega)$ the class of all continuous functions on $\omega$ and denote by $\mathcal{C}^+(\omega)\subset\mathcal{C}(\omega)$ the class of all positive continuous functions on $\omega$. The Wulff shape of $f\in\mathcal{C}^+(\omega)$ is defined by
\begin{equation*}
[f]=C\cap\bigcap_{v\in\omega}\{x\in\mathbb{R}^n\,|\,\langle x,v\rangle\leqslant-f(v)\},
\end{equation*}
which is a $C$-determined set by $\omega$, and so it is also a $C$-pseudo-cone.
\begin{lemma}\label{Lemma-Variational-formula}
Suppose that $\omega\subset\Omega_{C^\circ}$ is a nonempty and compact set. Let $K\in\mathcal{K}(C,\omega)$ and $f\in\mathcal{C}(\omega)$, then
\begin{equation*}
\delta\overline{\gamma}_n(K)(f)\triangleq\lim_{t\rightarrow0}\frac{\overline{\gamma}_n([\overline{h}_K|_\omega+tf])
-\overline{\gamma}_n(K)}{t}=\int_{\omega}f(v)\,dS_{\gamma_n}(K,v).
\end{equation*}
\end{lemma}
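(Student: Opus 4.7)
The plan is to carry out the standard Wulff-shape deformation argument, adapted to the $C$-pseudo-cone setting with Gaussian weight. Since $\omega$ is compact and $\overline{h}_K > 0$ on $\omega$, for $|t|$ small enough the function $g_t := \overline{h}_K|_\omega + tf$ lies in $\mathcal{C}^+(\omega)$, so $K_t := [g_t]$ is a well-defined member of $\mathcal{K}(C,\omega)$; the assumption $K \in \mathcal{K}(C,\omega)$ guarantees $K_0 = K$.

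First I would verify a continuity property: $K_t$ converges to $K$ in the sense of Schneider as $t \to 0$. Because $g_t \to g_0$ uniformly on $\omega$ and $K_t = C \cap \bigcap_{v \in \omega}\{x : \langle x,v \rangle \le -g_t(v)\}$, the truncations $K_t \cap C^-(T)$ converge to $K \cap C^-(T)$ in the Hausdorff metric for every sufficiently large $T$, and in particular $\overline{h}_{K_t} \to \overline{h}_K$ uniformly on $\omega$.

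The heart of the argument is to rewrite
\begin{equation*}
\overline{\gamma}_n(K_t) - \overline{\gamma}_n(K) = \frac{1}{(2\pi)^{n/2}}\bigg(\int_{K \setminus K_t} e^{-|x|^2/2}\,d\mathcal{H}^n - \int_{K_t \setminus K} e^{-|x|^2/2}\,d\mathcal{H}^n\bigg)
\end{equation*}
and to identify the symmetric difference $K \triangle K_t$ as a thin ``shell'' swept out along $\widetilde{\partial}K$. At $\mathcal{H}^{n-1}$-almost every point $x \in \widetilde{\partial}K$ there is a unique outer unit normal $v = v_K(x) \in \omega$, and the supporting hyperplane of $K_t$ with outer normal $v$ has moved from that of $K$ by exactly $tf(v)$ in the direction $-v$. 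Parametrizing the shell by these normal segments via the area formula, converting the resulting boundary integral into one over $\omega$ through the inverse Gauss map $\boldsymbol{\nu}_K^*$, and dividing by $t$ before passing to the limit leads to the claimed identity $\int_\omega f(v)\,dS_{\gamma_n}(K,v)$.

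The main obstacle will be justifying the passage to the limit rigorously. Two issues need handling: the singular part of $\widetilde{\partial}K$ (where multiple normals meet) has zero $\mathcal{H}^{n-1}$-measure and is thus $S_{\gamma_n}(K,\cdot)$-null, so it can be ignored; and the non-compactness of $\widetilde{\partial}K$ must be tamed by the Gaussian decay, exactly as in the proof of Lemma \ref{Lemma-finiteness-Gauss-surface-area-measure}. By decomposing along the slabs $\{x : t_i \le \langle x, \mathfrak{u}\rangle \le t_{i+1}\}$ and reusing the estimates $\mathcal{H}^{n-1}((\overline{E_i}\setminus E_i)\cap C(t_i)) \le C_1\mathcal{H}^{n-2}(\partial E \cap C(t_i))$ together with the summable factor $e^{-(t_0+i)^2/2}(t_0+i+1)^{n-2}$ from that proof, one obtains a uniform-in-$t$ dominating sequence, which legitimizes the exchange of limit and integral via dominated convergence and finishes the argument.
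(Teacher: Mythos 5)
Your overall strategy---computing the Gaussian measure of the symmetric difference $K\,\triangle\,K_t$, $K_t=[\overline{h}_K|_\omega+tf]$, and reading off the first-order term as a boundary integral---is the classical heuristic behind Aleksandrov-type variational lemmas, but as written it leaves exactly the hard step unproved. The crux is your assertion that the shell $K\,\triangle\,K_t$ is, to first order in $t$, a normal sweep of height $tf(v)$ over $\widetilde{\partial}K$, and that the area formula then ``leads to the claimed identity.'' Two things are missing there. First, the claim that the supporting hyperplane of $K_t$ with outer normal $v$ is displaced by exactly $tf(v)$ is not justified: for a Wulff shape one only knows $h_{K_t}(v)\leqslant h_K(v)-tf(v)$, with equality in general only for almost all $v$ with respect to the relevant surface area measure; and even at points where equality holds, the local shell is a slab of thickness $tf(v)$ only up to errors that must be shown to be $o(t)$ after integration over a boundary whose facial structure varies with $t$. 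Second, you produce no $t$-uniform dominating bound for the difference quotient, so the final interchange of limit and integral is not legitimized. These are precisely the ingredients the paper supplies through a different parametrization: it writes $\overline{\gamma}_n(K_t)$ in polar coordinates over $\Omega_C$, invokes Lemma 10 of \cite{Schneider-A_weighted_Minkowski_theorem} for the a.e.\ limit $(\rho_{K_t}(u)-\rho_K(u))/t\rightarrow f(\alpha_K(u))\rho_K(u)/\overline{h}_K(\alpha_K(u))$, uses the mean value theorem together with the bound $\rho_K\leqslant M'$ to dominate, and only then applies the dominated convergence theorem, the co-area formula and the push-forward to arrive at $\int_\omega f\,dS_{\gamma_n}(K,v)$. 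Your proposal needs an equivalent differentiability-plus-domination input; without it the central step is a heuristic, not a proof.

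Moreover, the part you single out as the main obstacle---non-compactness of $\widetilde{\partial}K$, to be tamed by Gaussian decay and the slab decomposition of Lemma \ref{Lemma-finiteness-Gauss-surface-area-measure}---is not actually an issue in this lemma. Since $K\in\mathcal{K}(C,\omega)$ with $\omega\subset\Omega_{C^\circ}$ compact, one has $\min\{|\langle u,v\rangle|\,:\,v\in\omega,\ u\in C\cap\mathbb{S}^{n-1}\}>0$, so every supporting hyperplane $H(v,h_K(v))$ with $v\in\omega$ meets $C$ in a bounded set; hence $K$ is a $C$-full set, $\widetilde{\partial}K$ is bounded, and for small $|t|$ the sets $C\setminus K_t$ all lie in one fixed ball (the paper encodes this simply as $\rho_K\leqslant M'$). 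In addition, the estimates in Lemma \ref{Lemma-finiteness-Gauss-surface-area-measure} bound slice areas of a single fixed pseudo-cone and do not by themselves yield a dominating function uniform in $t$ for the shells $K\,\triangle\,K_t$, so even where you invoke them they would not close the gap identified above.
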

\begin{proof}
Let $h_t=\overline{h}_K|_\omega+tf$. Using polar coordinates, we have
\begin{equation*}
\overline{\gamma}_n([h_t])=\frac{1}{(2\pi)^\frac{n}{2}}\int_{C\setminus[h_t]}e^{-\frac{|x|^2}{2}}\,d\mathcal{H}^n(x)
=\frac{1}{(2\pi)^\frac{n}{2}}\int_{\Omega_C}\int_0^{\rho_{[h_t]}(u)}e^{-\frac{r^2}{2}}r^{n-1}\,drdu.
\end{equation*}
Now, we define the function
\begin{equation*}
F_u(t)=\int_0^{\rho_{[h_t]}(u)}e^{-\frac{r^2}{2}}r^{n-1}\,dr.
\end{equation*}
By Lemma 10 in \cite{Schneider-A_weighted_Minkowski_theorem} and the mean value theorem of integrals, we have
\begin{equation*}
\begin{aligned}
\frac{F_u(t)-F_u(0)}{t}&=\frac{\rho_{[h_t]}(u)-\rho_K(u)}{t}\cdot\frac{1}{\rho_{[h_t]}(u)-\rho_K(u)}\int_{\rho_K(u)}^{\rho_{[h_t]}(u)}
e^{-\frac{r^2}{2}}r^{n-1}\,dr\\
&\rightarrow\frac{f(\alpha_K(u))}{\overline{h}_K(\alpha_K(u))}e^{-\frac{1}{2}\rho^2_K(u)}\rho^n_K(u),\ \mbox{for almost all}\ u\in\Omega_C,
\end{aligned}
\end{equation*}
and there is a constant $M$ (depending on $K$ and $f$) such that
\begin{equation*}
\Big|\frac{F_u(t)-F_u(0)}{t}\Big|=\Big|\frac{\rho_{[h_t]}(u)-\rho_K(u)}{t}\Big|\cdot e^{-\frac{\xi^2}{2}}\xi^{n-1}\leqslant M(\rho_K(u)+M)^{n-1}
\end{equation*}
for $t$ close to $0$. Since $K\in\mathcal{K}(C,\omega)$, there is another constant $M'$ depending only on $K$, such that $\rho_K(u)\leqslant M'$ for all $u\in\Omega_C$. Thus, for $t$ close to $0$, we have
\begin{equation*}
\Big|\frac{F_u(t)-F_u(0)}{t}\Big|\leqslant M''\triangleq M(M'+M)^{n-1}.
\end{equation*}

Finally, using the dominated convergence theorem, the Co-area formula and the push-forward measure, there holds
\begin{equation*}
\begin{aligned}
\lim_{t\rightarrow0}\frac{\overline{\gamma}_n([h_t])-\overline{\gamma}_n(K)}{t}&=\frac{1}{(2\pi)^\frac{n}{2}}\lim_{t\rightarrow0}
\int_{\Omega_C}\frac{F_u(t)-F_u(0)}{t}\,du\\
&=\frac{1}{(2\pi)^\frac{n}{2}}\int_{\Omega_C}\frac{f(\alpha_K(u))}{\overline{h}_K(\alpha_K(u))}e^{-\frac{1}{2}\rho^2_K(u)}\rho^n_K(u)\,du\\
&=\frac{1}{(2\pi)^\frac{n}{2}}\int_{\widetilde{\partial}K}f(\nu_K(x))e^{-\frac{|x|^2}{2}}\,d\mathcal{H}^{n-1}(x)\\
&=\int_{\omega}f(v)\,dS_{\gamma_n}(K,v).
\end{aligned}
\end{equation*}
In last line, we noted that $S_{\gamma_n}(K,\cdot)$ is concentrated on $\omega$.
\end{proof}

The following result shows that the Gaussian surface area measures for $C$-determined sets are weakly continuous.
\begin{lemma}\label{Lemma-weak-continuity-Gauss-surface-area-measure}
Let $\omega\subset\Omega_{C^\circ}$ be a nonempty compact set. If $K_i\in\mathcal{K}(C,\omega)$ converges to $K_0\in\mathcal{K}(C,\omega)$ as $i\rightarrow+\infty$, then $S_{\gamma_n}(K_i,\cdot)$ converges to $S_{\gamma_n}(K_0,\cdot)$ weakly.
\end{lemma}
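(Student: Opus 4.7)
The plan is to reduce the weak convergence on $\omega$ to a dominated convergence argument on $\Omega_C$ via the Gauss map representation exhibited inside the proof of Lemma \ref{Lemma-Variational-formula}. For any $K\in\mathcal{K}(C,\omega)$ and $f\in\mathcal{C}(\omega)$, the measure $S_{\gamma_n}(K,\cdot)$ is concentrated on $\omega$ and satisfies
\begin{equation*}
\int_\omega f(v)\,dS_{\gamma_n}(K,v)=\frac{1}{(2\pi)^{\frac{n}{2}}}\int_{\Omega_C}\frac{f(\alpha_K(u))}{\overline{h}_K(\alpha_K(u))}e^{-\frac{\rho^2_K(u)}{2}}\rho^n_K(u)\,du.
\end{equation*}
Since $\omega$ is compact, weak convergence of the finite measures $S_{\gamma_n}(K_i,\cdot)$ is equivalent to the convergence of this integral for every $f\in\mathcal{C}(\omega)$, so I would aim to pass to the limit under the integral on the right.

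First I would extract the necessary uniform estimates from the hypotheses. Schneider convergence combined with $C$-determinacy by the common compact set $\omega$ yields uniform convergence $\overline{h}_{K_i}\to\overline{h}_{K_0}$ on $\omega$; in particular there exist constants $0<a\leqslant b$ with $a\leqslant\overline{h}_{K_i}(v)\leqslant b$ on $\omega$ for all sufficiently large $i$. The same information produces a common ball $B_R$ containing every $C\setminus K_i$, so the boundaries $\widetilde{\partial}K_i$ are uniformly bounded. Choosing $t>0$ so large that $\widetilde{\partial}K_i\subset\text{int}\,C^-(t)$ for every $i\geqslant 0$, the truncations $K_i\cap C^-(t)$ become proper convex bodies converging to $K_0\cap C^-(t)$ in the Hausdorff metric by the definition of Schneider convergence. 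Standard convergence results for sequences of convex bodies then give $\rho_{K_i}(u)\to\rho_{K_0}(u)$ and $\alpha_{K_i}(u)\to\alpha_{K_0}(u)$ for $\mathcal{H}^{n-1}$-a.e.\ $u\in\Omega_C$.

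Next I would set up dominated convergence in the displayed identity. The a.e.\ convergence of the integrand follows from the previous step together with the continuity of $f$ and of $\overline{h}_{K_0}$. For the dominating function, I would combine the universal bound $e^{-\frac{r^2}{2}}r^n\leqslant n^{\frac{n}{2}}e^{-\frac{n}{2}}$ valid for all $r\geqslant 0$ with the lower bound $\overline{h}_{K_i}(\alpha_{K_i}(u))\geqslant a$, which holds because $\alpha_{K_i}(u)\in\omega$. This controls the integrand pointwise by the constant $\|f\|_\infty n^{\frac{n}{2}}e^{-\frac{n}{2}}/a$, which is integrable on $\Omega_C\subset\mathbb{S}^{n-1}$. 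Dominated convergence then delivers
\begin{equation*}
\int_\omega f\,dS_{\gamma_n}(K_i,\cdot)\longrightarrow\int_\omega f\,dS_{\gamma_n}(K_0,\cdot),
\end{equation*}
which is the claimed weak convergence.

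The main obstacle is justifying the $\mathcal{H}^{n-1}$-a.e.\ convergence of the spherical image maps $\alpha_{K_i}\to\alpha_{K_0}$ in the $C$-pseudo-cone setting; convergence of the radial functions follows more directly from Hausdorff convergence of the truncated convex bodies. For the $\alpha$-step I would quote the classical fact that the Gauss map is $\mathcal{H}^{n-1}$-a.e.\ continuous on the boundary of a convex body and behaves well under Hausdorff convergence, applied to the bounded convex bodies $K_i\cap C^-(t)$; since the relevant portion of the boundary lies in $\widetilde{\partial}K_i\subset\text{int}\,C^-(t)$, this transfers immediately back to the $C$-determined setting.
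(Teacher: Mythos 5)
Your proposal is correct and follows essentially the same route as the paper: rewrite $\int_\omega f\,dS_{\gamma_n}(K_i,\cdot)$ as an integral over $\Omega_C$ via the radial/Gauss map push-forward, bound the integrand uniformly using the uniform bound on $\rho_{K_i}$ and the lower bound on $\overline{h}_{K_i}$ on $\omega$, and conclude by dominated convergence. The only difference is that you spell out the $\mathcal{H}^{n-1}$-a.e.\ convergence $\rho_{K_i}\to\rho_{K_0}$, $\alpha_{K_i}\to\alpha_{K_0}$ from Hausdorff convergence of the truncations, a step the paper uses implicitly without comment.
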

\begin{proof}
For any bounded continuous function $f:\Omega_{C^\circ}\rightarrow\mathbb{R}$, by the Co-area formula and the push-forward measure we have
\begin{equation*}
\begin{aligned}
\int_{\Omega_{C^\circ}}f(v)\,dS_{\gamma_n}(K_i,v)=\frac{1}{(2\pi)^\frac{n}{2}}\int_{\Omega_C}f(\alpha_{K_i}(u))
\frac{\rho^n_{K_i}(u)}{\overline{h}_{K_i}(\alpha_{K_i}(u))}e^{-\frac{1}{2}\rho^2_{K_i}(u)}\,du.
\end{aligned}
\end{equation*}
Since $K_i$ converges to $K_0\in\mathcal{K}(C,\omega)$, $\rho_{K_i}$ is uniformly bounded on $\Omega_C$. Moreover, all normal vectors of $K_i$ are concentrated on $\omega$, so $\overline{h}_{K_i}(\alpha_{K_i}(u))$ is bounded away from $0$. Thus, $f(\alpha_{K_i}(u))\frac{\rho^n_{K_i}(u)}{\overline{h}_{K_i}(\alpha_{K_i}(u))}e^{-\frac{1}{2}\rho^2_{K_i}(u)}$ is uniformly bounded on $\Omega_C$. Then, the dominated convergence theorem shows
\begin{equation*}
\begin{aligned}
&\lim_{i\rightarrow+\infty}\int_{\Omega_C}f(\alpha_{K_i}(u))\frac{\rho^n_{K_i}(u)}{\overline{h}_{K_i}(\alpha_{K_i}(u))}
e^{-\frac{1}{2}\rho^2_{K_i}(u)}\,du\\
=&\int_{\Omega_C}\lim_{i\rightarrow+\infty}f(\alpha_{K_i}(u))\frac{\rho^n_{K_i}(u)}{\overline{h}_{K_i}(\alpha_{K_i}(u))}
e^{-\frac{1}{2}\rho^2_{K_i}(u)}\,du\\
=&\int_{\Omega_C}f(\alpha_{K_0}(u))\frac{\rho^n_{K_0}(u)}{\overline{h}_{K_0}(\alpha_{K_0}(u))}e^{-\frac{1}{2}\rho^2_{K_0}(u)}\,du.
\end{aligned}
\end{equation*}
By the Co-area formula and the push-forward measure again, we have
\begin{equation*}
\lim_{i\rightarrow+\infty}\int_{\Omega_{C^\circ}}f(v)\,dS_{\gamma_n}(K_i,v)=\int_{\Omega_{C^\circ}}f(v)\,dS_{\gamma_n}(K_0,v),
\end{equation*}
which implies the weak convergence $S_{\gamma_n}(K_i,\cdot)\rightarrow S_{\gamma_n}(K_0,\cdot)$.
\end{proof}

\begin{lemma}\label{Lemma-continuity-Gauss-co-volume}
Let $\{E_i\,|\,i\in\mathbb{N}_0\}\subset\mathcal{PC}(C)$ and $E_i\rightarrow E_0$ as $i\rightarrow+\infty$, then
\begin{equation*}
\lim_{i\rightarrow+\infty}\overline{\gamma}_n(E_i)=\overline{\gamma}_n(E_0).
\end{equation*}
\end{lemma}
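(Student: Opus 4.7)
The plan is to split each co-volume into a near part inside a cross-section $C^-(t)$ and a Gaussian tail beyond it. Because the Gaussian density decays super-exponentially in $\langle x,\mathfrak{u}\rangle$, the tail is uniformly small in $i$, while on the bounded slice Schneider convergence reduces to Hausdorff convergence of compact convex sets, which preserves Gaussian volume.

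First I would fix $\varepsilon>0$ and, using $|\langle x,\mathfrak{u}\rangle|\leqslant|x|$ together with a one-dimensional Gaussian tail estimate, choose $t_\varepsilon\geqslant t_0$ (where $t_0$ is the constant from the Schneider convergence of $E_i\to E_0$) so that
\[
\frac{1}{(2\pi)^{n/2}}\int_{\{x\in C\,:\,\langle x,\mathfrak{u}\rangle>t_\varepsilon\}} e^{-|x|^2/2}\,d\mathcal{H}^n(x)<\varepsilon.
\]
Since $C\setminus C^-(t_\varepsilon)\subset\{x\in C:\langle x,\mathfrak{u}\rangle>t_\varepsilon\}$, this bounds the Gaussian mass of $(C\setminus E_i)\cap(C\setminus C^-(t_\varepsilon))$ by $\varepsilon$ uniformly in $i\geqslant 0$ (including $i=0$).

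Next I would handle the near part. By the choice of $t_\varepsilon$ and the definition of Schneider convergence, $E_i\cap C^-(t_\varepsilon)\to E_0\cap C^-(t_\varepsilon)$ in the Hausdorff metric, and these are nonempty compact convex subsets of the bounded set $C^-(t_\varepsilon)$. Hausdorff convergence of compact convex bodies implies convergence of Lebesgue measures (boundaries of convex sets have $\mathcal{H}^n$-measure zero), and since the density $e^{-|x|^2/2}/(2\pi)^{n/2}$ is continuous and bounded on $C^-(t_\varepsilon)$, the bounded convergence theorem yields
\[
\gamma_n(E_i\cap C^-(t_\varepsilon))\longrightarrow\gamma_n(E_0\cap C^-(t_\varepsilon)).
\]
Subtracting from the constant $\gamma_n(C^-(t_\varepsilon))$ gives convergence of the near co-volumes, and combining with the tail bound via the triangle inequality produces
\[
\limsup_{i\to+\infty}|\overline{\gamma}_n(E_i)-\overline{\gamma}_n(E_0)|\leqslant 2\varepsilon.
\]
Sending $\varepsilon\to 0$ then completes the argument.

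The main obstacle I anticipate is the volume-continuity step on the slice: one must verify that Hausdorff convergence of the bounded convex sets $E_i\cap C^-(t_\varepsilon)$ really does produce Lebesgue (hence Gaussian) measure convergence, even in degenerate situations where $E_0\cap C^-(t_\varepsilon)$ might collapse in dimension or meet the slicing hyperplane $\{\langle x,\mathfrak{u}\rangle=t_\varepsilon\}$ in a large set. This is a standard fact about compact convex bodies and is the only place in the proof where convexity, rather than mere Gaussian decay, plays an essential role.
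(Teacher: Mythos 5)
Your argument is correct, and it follows the same basic strategy as the paper: split the co-volume at a slice $C^-(t)$, make the Gaussian tail uniformly small, and use Hausdorff convergence of the compact convex sets $E_i\cap C^-(t)$ on the bounded slice. The differences are in how the two pieces are handled. For the tail, the paper picks $z\in\text{int}\,E_0$, uses $z\in E_i$ for large $i$ to get $(C\setminus E_i)\setminus C^-(t)\subset(C\setminus(z+C))\setminus C^-(t)$, and makes the latter small; this containment trick is inherited from Schneider's Lebesgue-volume setting, where the cruder set $C\setminus C^-(t)$ has infinite volume. You instead bound the tail directly by $\gamma_n(C\setminus C^-(t))$, which is legitimate here precisely because $\gamma_n$ is a finite measure, so your tail estimate is simpler and works uniformly in $i$ without waiting for $z\in E_i$. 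For the slice, the paper invokes the quantitative Lemma 7 of Schneider's weighted Minkowski paper ($\mathcal{H}^n(A\setminus B)\leqslant a\,d_H(A,B)$ for convex bodies in $C^-(t)$) and then dominates the Gaussian measure of the symmetric-difference pieces by their Lebesgue measure; you use the soft continuity of volume under Hausdorff convergence plus boundedness of the density. One small point of precision there: convergence of the Lebesgue measures alone does not feed into the bounded convergence theorem; what you actually want is that the indicator functions $1_{E_i\cap C^-(t)}$ converge to $1_{E_0\cap C^-(t)}$ pointwise off $\partial(E_0\cap C^-(t))$ (interior points of the limit are eventually inside, exterior points eventually outside), and that this boundary is $\mathcal{H}^n$-null — which is exactly the fact your parenthetical gestures at — after which dominated convergence gives both the Lebesgue and the Gaussian statements, including in the degenerate case you flag. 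With that reading, your proof is complete; it trades the paper's quantitative lemma for a standard qualitative one and exploits the finiteness of the Gaussian measure to streamline the tail.
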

\begin{proof}
Note that $0<\overline{\gamma}_n(E)<\gamma_n(C)<\frac{1}{2}$ for any $E\in\mathcal{PC}(C)$. Choose $z\in \text{int}\,E_0$. For any $\varepsilon>0$, there exists $t>0$ such that
\begin{equation*}
\gamma_n\big((C\setminus(z+C))\setminus C^-(t)\big)<\frac{1}{4}\varepsilon,
\end{equation*}
where $z+C$ is also a $C$-pseudo-cone.

As Lemma 7 in \cite{Schneider-A_weighted_Minkowski_theorem}, there is a constant $a$ depending only on $C^-(t)$ such that $\mathcal{H}^n(A\setminus B)\leqslant\delta a$ for convex bodies $A,B\subset C^-(t)$ with $d_H(A,B)\leqslant\delta$. Since $E_i\rightarrow E_0$ as $i\rightarrow+\infty$, there is $N>0$ such that
\begin{equation*}
d_H(E_i\cap C^-(t),E_0\cap C^-(t))\leqslant\frac{1}{4a}\varepsilon\ \mbox{and}\ z\in E_i
\end{equation*}
for $i>N$. Thus, we have
\begin{equation*}
\mathcal{H}^n\big((E_i\setminus E_0)\cap C^-(t)\big)\leqslant\frac{1}{4}\varepsilon\ \ \mbox{and}\ \
\mathcal{H}^n\big((E_0\setminus E_i)\cap C^-(t)\big)\leqslant\frac{1}{4}\varepsilon\ \ \mbox{for all}\ i>N.
\end{equation*}
Therefore, for above $\varepsilon>0$, we have
\begin{equation*}
\begin{aligned}
\big|\overline{\gamma}_n(E_i)-\overline{\gamma}_n(E_0)\big|&\leqslant\gamma_n((E_0\setminus E_i)\cap C^-(t))+\gamma_n((E_i\setminus E_0)\cap C^-(t))\\
&\quad+\gamma_n((C\setminus E_i)\setminus C^-(t))+\gamma_n((C\setminus E_0)\setminus C^-(t))\\
&\leqslant\mathcal{H}^n((E_0\setminus E_i)\cap C^-(t)+\mathcal{H}^n((E_i\setminus E_0)\cap C^-(t))\\
&\quad+\gamma_n((C\setminus(z+C))\setminus C^-(t))+\gamma_n((C\setminus(z+C))\setminus C^-(t))\\
&\leqslant\frac{1}{4}\varepsilon+\frac{1}{4}\varepsilon+\frac{1}{4}\varepsilon+\frac{1}{4}\varepsilon=\varepsilon,\ \mbox{for}\ i>N.
\end{aligned}
\end{equation*}
That is, $\lim_{i\rightarrow+\infty}\overline{\gamma}_n(E_i)=\overline{\gamma}_n(E_0)$.
\end{proof}

\section{The variational results on compact sets}

Let $\omega\subset\Omega_{C^\circ}$ be a nonempty compact set and $\mu$ be a finite measure on $\omega$. Given $\alpha>\frac{1}{n}$, we consider the following extremal problem:
\begin{equation*}
\sup\{\mathcal{F}(f)\,|\,f\in\mathcal{C}^+(\omega)\ \mbox{and}\ \overline{\gamma}_n([f])\leqslant\frac{1}{2}\beta\},
\end{equation*}
where
\begin{equation*}
\mathcal{F}(f)=\int_\omega f\,d\mu-\frac{1}{\alpha}\overline{\gamma}_n([f])^\alpha,
\end{equation*}
and
\begin{equation*}
\beta=\frac{\mathcal{H}^{n-1}(\mathbb{S}^{n-1}\cap C)}{\mathcal{H}^{n-1}(\mathbb{S}^{n-1})}=\gamma_n(C).
\end{equation*}

\begin{lemma}\label{Lemma-delta-function}
There holds $\mathcal{F}(\delta)>0$ for a sufficiently small positive constant function $\delta$ in $\mathcal{C}^+(\omega)$.
\end{lemma}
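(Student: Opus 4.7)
The plan is to exploit the fact that when the constant $\delta>0$ shrinks, the ``missing'' region $C\setminus[\delta]$ is confined to a ball of radius comparable to $\delta$, so the co-volume $\overline{\gamma}_n([\delta])$ decays like $\delta^n$ while the linear functional $\int_\omega\delta\,d\mu=\delta\,\mu(\omega)$ is only of order $\delta$. The hypothesis $\alpha>\frac{1}{n}$ gives $n\alpha>1$, and this is exactly the threshold that forces the superlinear penalty $\frac{1}{\alpha}\overline{\gamma}_n([\delta])^\alpha=O(\delta^{n\alpha})$ to be dominated by the linear gain for small $\delta$.

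The first step is to produce a uniform constant $c=c(\omega,C)>0$ with
\begin{equation*}
\langle x,v\rangle\leqslant -c\,|x|\qquad\text{for all } x\in C,\ v\in\omega.
\end{equation*}
Since $\omega$ is a compact subset of $\Omega_{C^\circ}$, every $v\in\omega$ lies in $\text{\em int}\,C^\circ$, so $\langle v,u\rangle<0$ for each $u\in C\setminus\{o\}$. Continuity of the inner product together with compactness of $\omega\times(\mathbb{S}^{n-1}\cap C)$ yields a uniform negative maximum $-c$; positive homogeneity in $x$ then extends the bound to all of $C$.

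The second step localizes $C\setminus[\delta]$. For the constant function $\delta$, the Wulff shape is
$[\delta]=C\cap\bigcap_{v\in\omega}\{x\,|\,\langle x,v\rangle\leqslant-\delta\}$, so any $x\in C\setminus[\delta]$ satisfies $\langle x,v\rangle>-\delta$ for some $v\in\omega$. Combined with the first step this forces $c\,|x|<\delta$, hence $C\setminus[\delta]\subset C\cap\frac{\delta}{c}B^n$. Using $e^{-|x|^2/2}\leqslant1$ and scaling,
\begin{equation*}
\overline{\gamma}_n([\delta])\leqslant\frac{1}{(2\pi)^{n/2}}\mathcal{H}^n\bigl(C\cap\tfrac{\delta}{c}B^n\bigr)=K\,\delta^n,
\end{equation*}
where $K$ depends only on $n$, $c$, and $C$.

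Combining these two estimates gives
\begin{equation*}
\mathcal{F}(\delta)=\delta\,\mu(\omega)-\frac{1}{\alpha}\overline{\gamma}_n([\delta])^\alpha\geqslant\delta\,\mu(\omega)-\frac{K^\alpha}{\alpha}\,\delta^{n\alpha}.
\end{equation*}
Since $n\alpha>1$ and $\mu(\omega)>0$ (the measure $\mu$ being nonzero as in the Minkowski data), the right-hand side is strictly positive once $\delta$ is taken small enough. The only delicate point is the first step: the uniform separation constant $c$ rests on $\omega$ being bounded away from $\partial C^\circ$, which is where compactness of $\omega\subset\Omega_{C^\circ}$ is essential; once $c$ is in hand, the decay rate $O(\delta^n)$ for the co-volume and the resulting matching with the exponent $\alpha>\frac{1}{n}$ are immediate.
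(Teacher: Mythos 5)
Your proof is correct, and it rests on the same quantitative mechanism as the paper's: the complement $C\setminus[\delta]$ is trapped in a cone sector of radius $O(\delta)$, so the penalty term is $O(\delta^{n\alpha})$ and loses to the linear gain $\delta\,\mu(\omega)$ precisely because $n\alpha>1$ (and $\mu(\omega)>0$). The difference is only in how the localization and the resulting bound are obtained. The paper writes $[\delta]=\delta\,[1]$, invokes the homogeneity of the Wulff shape together with the fact that $[1]$ is a $C$-full set to get $C\setminus[\delta]\subset(s\delta)B^n\cap C$, and then evaluates the Gaussian measure of that sector exactly in polar coordinates, checking $\delta^{-1/\alpha}\int_0^{s\delta}e^{-t^2/2}t^{n-1}\,dt\to0$. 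You instead prove the inclusion directly: compactness of $\omega\subset\Omega_{C^\circ}$ and of $\mathbb{S}^{n-1}\cap C$ gives a uniform constant $c>0$ with $\langle x,v\rangle\leqslant-c|x|$ on $C\times\omega$, forcing $C\setminus[\delta]\subset\frac{\delta}{c}B^n\cap C$, and you then use the crude bound $e^{-|x|^2/2}\leqslant1$ to get $\overline{\gamma}_n([\delta])\leqslant K\delta^n$. Your separation argument is self-contained (it in effect re-proves the $C$-fullness of $[1]$ that the paper takes as known), and the Lebesgue bound is slightly cruder but entirely sufficient; the paper's scaling argument is shorter given the cited background. Both hinge on exactly the same threshold $\alpha>\frac{1}{n}$, so this is a minor variation rather than a different proof.
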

\begin{proof}
Let $1$ be the unit constant function on $\omega$. Since $[1]$ is a $C$-full set, there is a constant $s>0$ such that
\begin{equation*}
\rho_{[1]}(u)\leqslant s,\,\forall\ u\in\Omega_C,\ \mbox{i.e.}\ \ C\setminus[1]\subset sB^n\cap C.
\end{equation*}
Then, by the homogeneity of the Wulff shape, one has $C\setminus[\delta]\subset[(s\delta)B^n]\cap C$ for a positive constant function $\delta$ in $\mathcal{C}^+(\omega)$. Similar to Lemma 3.7 in \cite{Huang-Xi-Zhao-The_Gaussian_Minkowski_problem} but with a little difference, we have
\begin{equation*}
\begin{aligned}
\mathcal{F}(\delta)&=\int_\omega\delta\,d\mu-\frac{1}{\alpha}\overline{\gamma}_n([\delta])^\alpha\\
&>\delta\,\mu(\omega)-\frac{1}{\alpha}\gamma_n([(s\delta)B^n]\cap C)^\alpha\\
&=\delta\,\mu(\omega)-\frac{1}{\alpha}\bigg(\frac{\mathcal{H}^{n-1}(\mathbb{S}^{n-1}\cap C)}{(2\pi)^\frac{n}{2}}
\int_0^{s\delta}e^{-\frac{t^2}{2}}t^{n-1}\,dt\bigg)^\alpha\\
&=\delta\bigg[\mu(\omega)-\frac{1}{\alpha}\bigg(\frac{\mathcal{H}^{n-1}(\mathbb{S}^{n-1}\cap C)}{(2\pi)^\frac{n}{2}}\bigg)^\alpha
\bigg(\delta^{-\frac{1}{\alpha}}\int_0^{s\delta}e^{-\frac{t^2}{2}}t^{n-1}\,dt\bigg)^\alpha\bigg].
\end{aligned}
\end{equation*}
For $\alpha>\frac{1}{n}$, there holds
\begin{equation*}
\lim_{\delta\rightarrow0^+}\delta^{-\frac{1}{\alpha}}\int_0^{s\delta}e^{-\frac{t^2}{2}}t^{n-1}\,dt
=\alpha s^n\lim_{\delta\rightarrow0^+}\delta^{n-\frac{1}{\alpha}}=0.
\end{equation*}
Therefore, $\mathcal{F}(\delta)>0$ for a sufficiently small positive constant function $\delta$ in $\mathcal{C}^+(\omega)$.
\end{proof}

\begin{lemma}[Upper bound estimate]\label{Lemma-Upper-bound-estimate}
There exists a constant $\Lambda$, such that
\begin{equation*}
b(E)\leqslant\Lambda
\end{equation*}
for all $E\in\mathcal{PC}(C)$ satisfying $\overline{\gamma}_n(E)\leqslant\frac{1}{2}\beta$.
\end{lemma}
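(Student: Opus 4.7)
The plan is to exploit the contrapositive: if $b(E)$ is very large, then the entire $C$-pseudo-cone $E$ is pushed far from the origin, so $C\setminus E$ contains a large ball-shaped piece of $C$ near $o$, forcing $\overline{\gamma}_n(E)$ to be close to $\gamma_n(C)=\beta$ and in particular to exceed $\frac{1}{2}\beta$.

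Concretely, I would first observe that for any $E\in\mathcal{PC}(C)$, the definition $b(E)=\inf\{|x|:x\in E\}$ means every point of $E$ has norm $\geqslant b(E)$, so
\begin{equation*}
E\subset\{x\in\mathbb{R}^n\,|\,|x|\geqslant b(E)\},\qquad C\setminus E\supset C\cap\operatorname{int}(b(E)B^n).
\end{equation*}
Hence
\begin{equation*}
\overline{\gamma}_n(E)=\gamma_n(C\setminus E)\geqslant\gamma_n\bigl(C\cap b(E)B^n\bigr).
\end{equation*}

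Next, I would let $r\mapsto\gamma_n(C\cap rB^n)$. This is a continuous, nondecreasing function of $r>0$, and by monotone convergence
\begin{equation*}
\lim_{r\to+\infty}\gamma_n(C\cap rB^n)=\gamma_n(C)=\beta.
\end{equation*}
Therefore I can choose $\Lambda>0$ large enough so that $\gamma_n(C\cap\Lambda B^n)>\tfrac{1}{2}\beta$; note that $\Lambda$ depends only on $C$ (and hence only on $\beta$), not on $E$ or $\mu$.

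Finally, suppose for contradiction that some $E\in\mathcal{PC}(C)$ with $\overline{\gamma}_n(E)\leqslant\tfrac{1}{2}\beta$ satisfies $b(E)>\Lambda$. Then by the displayed inclusion above and monotonicity of $\gamma_n$,
\begin{equation*}
\overline{\gamma}_n(E)\geqslant\gamma_n(C\cap b(E)B^n)\geqslant\gamma_n(C\cap\Lambda B^n)>\tfrac{1}{2}\beta,
\end{equation*}
contradicting the hypothesis. So $b(E)\leqslant\Lambda$ for every such $E$. There is no real obstacle here; the only substantive input is the continuity/monotone-convergence fact that $\gamma_n(C\cap rB^n)\uparrow\gamma_n(C)$, which is immediate from the definition of $\gamma_n$ as an absolutely continuous measure with density $(2\pi)^{-n/2}e^{-|x|^2/2}$.
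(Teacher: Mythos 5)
Your proposal is correct and follows essentially the same route as the paper: both rest on the inclusion $C\cap b(E)B^n\subset C\setminus E$ (up to a null set) together with the fact that $\gamma_n(C\cap rB^n)\uparrow\gamma_n(C)=\beta$ as $r\to+\infty$. The only cosmetic difference is that you fix an explicit $\Lambda$ with $\gamma_n(C\cap\Lambda B^n)>\tfrac12\beta$ and argue directly, whereas the paper phrases the same argument as a contradiction along a sequence $E_i$ with $b(E_i)\to+\infty$.
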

\begin{proof}
Suppose that $\{E_i\,|\,i\in\mathbb{N}_0\}\subset\mathcal{PC}(C)$ satisfies
\begin{equation*}
\overline{\gamma}_n(E_i)\leqslant\frac{1}{2}\beta\ \ \mbox{and}\ \lim_{i\rightarrow+\infty}b(E_i)=+\infty.
\end{equation*}
\newpage
Since $b(E_i)B^n\cap C\subset C\setminus E_i$, we have
\begin{equation*}
\gamma_n(C)\geqslant\overline{\gamma}_n(E_i)=\gamma_n(C\setminus E_i)\geqslant\gamma_n(b(E_i)B^n\cap C).
\end{equation*}
Letting $i\rightarrow+\infty$, then one has
\begin{equation*}
\lim_{i\rightarrow+\infty}\overline{\gamma}_n(E_i)=\gamma_n(C)=\beta,
\end{equation*}
which contradicts to $\overline{\gamma}_n(E_i)\leqslant\frac{1}{2}\beta$.
\end{proof}

\begin{theorem}\label{Theorem-solution-compact-set}
Suppose that $\omega\subset\Omega_{C^\circ}$ is a nonempty compact set and $n\alpha>1$. Let $\mu$ be a nonzero Borel measure on $\omega$, then there is $K\in\mathcal{K}(C,\omega)$ with $\overline{\gamma}_n(K)\leqslant\frac{1}{2}\beta$ such that
\begin{equation*}
\overline{\gamma}_n(K)^{\alpha-1}S_{\gamma_n}(K,\cdot)=\mu
\end{equation*}
if and only if $\mu$ is finite.
\end{theorem}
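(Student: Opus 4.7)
The plan is to handle the two directions separately. Necessity is immediate: if $K\in\mathcal{K}(C,\omega)$ satisfies the stated equation, then $\mu$ is a positive scalar multiple of $S_{\gamma_n}(K,\cdot)$, which has finite total mass by Lemma \ref{Lemma-finiteness-Gauss-surface-area-measure}.

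For sufficiency, I would use the variational scheme set up by the extremal problem preceding Lemma \ref{Lemma-delta-function}. Observe first that for any feasible $f\in\mathcal{C}^+(\omega)$ with $K=[f]\in\mathcal{K}(C,\omega)$, the definition of the Wulff shape gives $\overline{h}_K|_\omega\geq f$ and $[\overline{h}_K|_\omega]=K$, so $\mathcal{F}(\overline{h}_K|_\omega)\geq\mathcal{F}(f)$; hence I may take a maximizing sequence $K_i\in\mathcal{K}(C,\omega)$ with $f_i=\overline{h}_{K_i}|_\omega$, $\overline{\gamma}_n(K_i)\leq\tfrac{1}{2}\beta$, and $\mathcal{F}(f_i)\to M:=\sup\mathcal{F}$, where $M>0$ by Lemma \ref{Lemma-delta-function}. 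Lemma \ref{Lemma-Upper-bound-estimate} bounds $b(K_i)\leq\Lambda$; for a matching lower bound, pick $x^*\in K_i$ with $|x^*|=b(K_i)$, so that for every $v\in\Omega_{C^\circ}$ one has $\langle x^*,v\rangle\leq 0$ and $\overline{h}_{K_i}(v)\leq-\langle x^*,v\rangle\leq b(K_i)$. Then $\mathcal{F}(f_i)\leq\int_\omega f_i\,d\mu\leq b(K_i)\mu(\omega)$, and since $\mathcal{F}(f_i)\to M>0$ and $\mu(\omega)>0$ this forces $b(K_i)$ to be bounded below away from $0$. Lemma \ref{Lemma-Schneider-selection-theorem} then yields a subsequence converging (in Schneider's sense) to some $K_0\in\mathcal{PC}(C)$.

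Set $K=K_0^{(\omega)}\in\mathcal{K}(C,\omega)$. Since $K\supseteq K_0$, $\overline{\gamma}_n(K)\leq\overline{\gamma}_n(K_0)=\lim\overline{\gamma}_n(K_i)\leq\tfrac{1}{2}\beta$ by Lemma \ref{Lemma-continuity-Gauss-co-volume}; moreover $\overline{h}_K=\overline{h}_{K_0}$ on $\omega$ and $f_i\to\overline{h}_K|_\omega$ uniformly on $\omega$ (the standard uniform convergence of support functions on compact subsets of $\Omega_{C^\circ}$ under Schneider convergence), so $\mathcal{F}(\overline{h}_K|_\omega)\geq M$ and $K$ is a feasible maximizer. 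Provided the constraint is inactive at $K$ (the delicate point; see below), for any $g\in\mathcal{C}(\omega)$ and $|t|$ sufficiently small $h_t=\overline{h}_K|_\omega+tg$ is feasible, so the first-order optimality condition together with Lemma \ref{Lemma-Variational-formula} gives
\begin{equation*}
0=\int_\omega g\,d\mu-\overline{\gamma}_n(K)^{\alpha-1}\int_\omega g\,dS_{\gamma_n}(K,\cdot),
\end{equation*}
hence $\mu=\overline{\gamma}_n(K)^{\alpha-1}S_{\gamma_n}(K,\cdot)$. The main obstacle is precisely ruling out the boundary case $\overline{\gamma}_n(K)=\tfrac{1}{2}\beta$, in which the variational argument would only produce $\mu=(\overline{\gamma}_n(K)^{\alpha-1}+\lambda)S_{\gamma_n}(K,\cdot)$ for some Lagrange multiplier $\lambda\geq0$. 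I would address this by a perturbation argument in the spirit of \cite{Huang-Xi-Zhao-The_Gaussian_Minkowski_problem}, exploiting the hypothesis $n\alpha>1$ (the same assumption that powers Lemma \ref{Lemma-delta-function}): a one-parameter rescaling $t\mapsto tK$ or a carefully chosen one-sided test function should, if the constraint were active, yield a feasible competitor with strictly larger $\mathcal{F}$, contradicting the maximality of $K$ and forcing $\overline{\gamma}_n(K)<\tfrac{1}{2}\beta$.
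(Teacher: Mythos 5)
Your overall scheme is the paper's: the same functional $\mathcal{F}$, positivity of the supremum from Lemma \ref{Lemma-delta-function}, the upper bound $b\leqslant\Lambda$ from Lemma \ref{Lemma-Upper-bound-estimate}, a lower bound on $b$ along the maximizing sequence, Schneider's selection theorem, continuity of $\overline{\gamma}_n$, and the variational formula of Lemma \ref{Lemma-Variational-formula} feeding an Euler--Lagrange identity. Your two deviations are harmless and correct: you bound $b(K_i)$ from below directly via $\mathcal{F}(f_i)\leqslant\int_\omega f_i\,d\mu\leqslant b(K_i)\mu(\omega)$ (the paper argues by contradiction from $0<f\leqslant b([f])$), and you pass to $K=K_0^{(\omega)}$ using $\overline{h}_{K_0^{(\omega)}}=\overline{h}_{K_0}$ on $\omega$, whereas the paper shows the bodies $[f_i]$ are uniformly contained in a slab $C^-(T)$ via supporting hyperplanes and concludes $K\in\mathcal{K}(C,\omega)$ directly; note that the uniform convergence of $\overline{h}_{K_i}$ on $\omega$ which you invoke as ``standard'' is exactly what that hyperplane argument is there to justify, so it deserves a proof or a precise citation.

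The genuine gap is the step you explicitly defer: showing that the constraint $\overline{\gamma}_n\leqslant\tfrac12\beta$ is inactive at the maximizer. As you say, at a boundary maximizer only one-sided perturbations are admissible, and one obtains merely $\mu\geqslant\overline{\gamma}_n(K)^{\alpha-1}S_{\gamma_n}(K,\cdot)$ (a Lagrange-multiplier identity), not the theorem. Your proposed repair cannot work in general: the hypothesis $n\alpha>1$ enters only through Lemma \ref{Lemma-delta-function}, and there is a concrete obstruction. For any closed convex set $E$ one has $\partial E\cap(k+1)B^n\subset\partial\big(E\cap(k+1)B^n\big)$, so monotonicity of surface area gives $\mathcal{H}^{n-1}\big(\partial E\cap(k+1)B^n\big)\leqslant(k+1)^{n-1}\mathcal{H}^{n-1}(\mathbb{S}^{n-1})$; summing over the annuli $kB^n\setminus(k-1)B^n$ with the weight $e^{-k^2/2}$ shows $S_{\gamma_n}(E,\Omega_{C^\circ})\leqslant c_n$ for a constant depending only on $n$, uniformly over all $E\in\mathcal{PC}(C)$. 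Since $\overline{\gamma}_n(K)^{\alpha-1}\leqslant1$ when $\alpha\geqslant1$ (the case actually used in Section 5), the asserted equation has no solution at all once $\mu(\omega)>c_n$; for such $\mu$ the maximizer must sit on the constraint boundary, moving inward strictly decreases $\mathcal{F}$, and no rescaling or one-sided competitor of the kind you sketch can force $\overline{\gamma}_n(K)<\tfrac12\beta$. So the point you flag is not a routine technicality: closing it requires at least a smallness hypothesis on $|\mu|$ (as in the Gaussian--Minkowski problem for convex bodies). For comparison, the paper's own proof does not address this issue either --- it differentiates $\mathcal{F}$ at $\overline{h}_K|_\omega$ in arbitrary directions as if the constraint were inactive --- so you have correctly identified the weak spot, but your proposal leaves the decisive step unproven, and as stated it would fail for measures of large total mass.
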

\begin{proof}
$(i)$ Let $\mu$ be nonzero and finite. For any $f\in\mathcal{C}^+(\omega)$ with $\overline{\gamma}_n([f])\leqslant\frac{1}{2}\beta$, by Lemma \ref{Lemma-Upper-bound-estimate} we have $b([f])\leqslant\Lambda$ for a constant $\Lambda$. According to the definition of the Wulff shape and Lemma 1 in \cite{Schneider-Minkowski_type_theorems}, one has
\begin{equation*}
-b([f])\leqslant h_{[f]}\leqslant-f,
\end{equation*}
which shows
\begin{equation*}
\mathcal{F}(f)=\int_\omega f\,d\mu-\frac{1}{\alpha}\overline{\gamma}_n([f])^\alpha
\leqslant b([f])\int_\omega\,d\mu\leqslant\Lambda|\mu|.
\end{equation*}
Combining this with Lemma \ref{Lemma-delta-function}, there holds
\begin{equation*}
0<\sup\{\mathcal{F}(f)\,|\,f\in\mathcal{C}^+(\omega)\ \mbox{and}\ \overline{\gamma}_n([f])\leqslant\frac{1}{2}\beta\}\triangleq A<+\infty.
\end{equation*}

Choose a sequence of $f_i\in\mathcal{C}^+(\omega)$ with $\overline{\gamma}_n([f_i])\leqslant\frac{1}{2}\beta$, such that
\begin{equation*}
\lim_{i\rightarrow+\infty}\mathcal{F}(f_i)=A.
\end{equation*}
There exists a constant $\lambda>0$, such that $b([f_i])\geqslant\lambda$ for all $i\in\mathbb{N}$. Otherwise, there is a subsequence $\{f_{i_k}\}$ of $\{f_i\}$ such that $b([f_{i_k}])\rightarrow0$ as $k\rightarrow+\infty$. From $0<f_{i_k}\leqslant b([f_{i_k}])$, we know $f_{i_k}\rightarrow0$ uniformly on $\omega$ as $k\rightarrow+\infty$. And so $\overline{\gamma}_n([f_{i_k}])\rightarrow0$ as $k\rightarrow+\infty$. Then,
\begin{equation*}
\lim_{k\rightarrow+\infty}\mathcal{F}(f_{i_k})=\lim_{k\rightarrow+\infty}\int_\omega f_{i_k}\,d\mu
-\lim_{k\rightarrow+\infty}\frac{1}{\alpha}\overline{\gamma}_n([f_{i_k}])^\alpha=0.
\end{equation*}
Hence, we have the following contradiction
\begin{equation*}
\lim_{k\rightarrow+\infty}\mathcal{F}(f_{i_k})=\lim_{i\rightarrow+\infty}\mathcal{F}(f_i)=A>0.
\end{equation*}

In a word, there holds $0<\lambda\leqslant b([f_i])\leqslant\Lambda$. By the Schneider selection theorem (Lemma \ref{Lemma-Schneider-selection-theorem}), there is a $C$-pseudo-cone $K$ such that
\begin{equation*}
[f_i]\rightarrow K,\ \mbox{as}\ i\rightarrow+\infty.
\end{equation*}
Here we still denote the subsequence in Lemma \ref{Lemma-Schneider-selection-theorem} by $[f_i]$. For any support hyperplane $H(v,\tau)$ of $[f_i]$, in which $v\in\omega$, we have
\begin{equation*}
H(v,\tau)\cap(\Lambda B^n\cap C)\neq\emptyset.
\end{equation*}
Otherwise, $H(v,\tau)\cap(\Lambda B^n\cap C)=\emptyset$ contradicts to $b([f_i])\leqslant\Lambda$. Note that $\Lambda B^n\cap C$ is a bounded set in $C$, by Lemma 8 in \cite{Schneider-A_Brunn_Minkowski_theory}, there is a constant $T>0$ such that
\begin{equation*}
H(v,\tau)\cap C\subset C^-(T).
\end{equation*}
Thus, we have $\boldsymbol{\nu}^*_{[f_i]}(\omega)=\widetilde{\partial}[f_i]\subset C^-(T)$, i.e., $\rho_{[f_i]}\leqslant T'$ for a constant $T'$ depending on $C$, $T$ and $\mathfrak{u}$. According to the definition of convergence of $C$-pseudo-cones in the sense of Schneider, we know that $K$ is a $C$-full set and $\rho_K\leqslant T'$. Hence, by Lemma 6 in \cite{Schneider-A_Brunn_Minkowski_theory}, we have $K\in\mathcal{K}(C,\omega)$. And by Lemma \ref{Lemma-continuity-Gauss-co-volume}, one has $\overline{\gamma}_n(K)\leqslant\frac{1}{2}\beta$. From
\begin{equation*}
\mathcal{F}(f_i)\leqslant\mathcal{F}(\overline{h}_{[f_i]})\rightarrow\mathcal{F}(\overline{h}_K|_\omega),\ \text{as}\ i\rightarrow+\infty,
\end{equation*}
one has $A\leqslant\mathcal{F}(\overline{h}_K|_\omega)$. Therefore, $\overline{h}_K|_\omega$ is the maximizer of the functional $\mathcal{F}$.

For any $f\in\mathcal{C}(\omega)$ and sufficiently small $t$, by Lemma \ref{Lemma-Variational-formula}, we have
\begin{equation*}
\begin{aligned}
0&=\delta\mathcal{F}(\overline{h}_K|_\omega)(f)\\
&=\int_\omega f\,d\mu-\overline{\gamma}_n(K)^{\alpha-1}\delta\overline{\gamma}_n(K)(f)\\
&=\int_\omega f\,d\mu-\overline{\gamma}_n(K)^{\alpha-1}\int_{\omega}f(v)\,dS_{\gamma_n}(K,v).
\end{aligned}
\end{equation*}
Thus, the Euler-Lagrange equation is
\begin{equation*}
\overline{\gamma}_n(K)^{\alpha-1}\int_{\omega}f(v)\,dS_{\gamma_n}(K,v)=\int_\omega f\,d\mu.
\end{equation*}
By the Riesz representation theorem, one has
\begin{equation*}
\overline{\gamma}_n(K)^{\alpha-1}S_{\gamma_n}(K,\cdot)=\mu.
\end{equation*}
$(ii)$ If there is $K\in\mathcal{K}(C,\omega)$ with $\overline{\gamma}_n(K)\leqslant\frac{1}{2}\beta$ such that $\overline{\gamma}_n(K)^{\alpha-1}S_{\gamma_n}(K,\cdot)=\mu$, then $\mu$ is a nonzero finite Borel measure due to Lemma \ref{Lemma-finiteness-Gauss-surface-area-measure}.
\end{proof}

\section{Proof of Theorem \ref{Main-Theorem}}

Now, we use the approximation methods in \cite{Schneider-Pseudo_cones,Schneider-A_weighted_Minkowski_theorem} to prove the following results, which is Theorem \ref{Main-Theorem} for $\alpha=1$.
\begin{theorem}
Let $\mu$ be a Borel measure on $\Omega_{C^\circ}$ and $\alpha\geqslant1$, then there is a $C$-pseudo-cone $E$ with $\overline{\gamma}_n(E)\leqslant\frac{1}{2}\beta$ such that
\begin{equation*}
\overline{\gamma}_n(E)^{\alpha-1}S_{\gamma_n}(E,\cdot)=\mu
\end{equation*}
if and only if $\mu$ is finite.
\end{theorem}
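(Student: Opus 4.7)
The plan is to prove this by the standard exhaustion/approximation scheme using Theorem \ref{Theorem-solution-compact-set} as a black box. The necessity direction is immediate from Lemma \ref{Lemma-finiteness-Gauss-surface-area-measure}, so only sufficiency requires work. First I would choose an increasing sequence of nonempty compact sets $\omega_i \subset \Omega_{C^\circ}$ with $\omega_i \subset \omega_{i+1}$ and $\bigcup_i \omega_i = \Omega_{C^\circ}$, and fix in advance a compact $\omega_0 \subset \Omega_{C^\circ}$ with $\mu(\omega_0)>0$ (which exists since $\mu$ is nonzero); then $\omega_0 \subset \omega_i$ for all large $i$. Setting $\mu_i = \mu|_{\omega_i}$, Theorem \ref{Theorem-solution-compact-set} produces $K_i \in \mathcal{K}(C,\omega_i)$ with $\overline{\gamma}_n(K_i) \leqslant \tfrac{1}{2}\beta$ and $\overline{\gamma}_n(K_i)^{\alpha-1} S_{\gamma_n}(K_i,\cdot) = \mu_i$.

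The heart of the argument is establishing uniform two-sided bounds on $b(K_i)$. The upper bound $b(K_i)\leqslant \Lambda$ is Lemma \ref{Lemma-Upper-bound-estimate}. For the lower bound, which I expect to be the main obstacle, I would use the fact that for any fixed compact $\omega_0 \subset \Omega_{C^\circ}$ there is a constant $c_0 = c_0(\omega_0,C) > 0$ such that $\langle x, -v\rangle \geqslant c_0 |x|$ for every $v \in \omega_0$ and $x \in C$. Combined with $\overline{h}_{K_i}\leqslant b(K_i)$ on $\Omega_{C^\circ}$, this forces $\boldsymbol{\nu}^*_{K_i}(\omega_0) \subset (b(K_i)/c_0)\,B^n \cap C$, so its $(n{-}1)$-dimensional Hausdorff measure is $O(b(K_i)^{n-1})$ and consequently $S_{\gamma_n}(K_i,\omega_0)\leqslant C'\,b(K_i)^{n-1}$. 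Since $\overline{\gamma}_n(K_i)^{\alpha-1}\leqslant (\tfrac{1}{2}\beta)^{\alpha-1}$ and $\mu_i(\omega_0)\geqslant \mu(\omega_0)>0$ for large $i$, the Minkowski equation rearranges to a uniform lower bound $b(K_i)\geqslant \lambda>0$.

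With $0<\lambda\leqslant b(K_i)\leqslant \Lambda$ in hand, the Schneider selection theorem (Lemma \ref{Lemma-Schneider-selection-theorem}) extracts a subsequence converging (in Schneider's sense) to some $E \in \mathcal{PC}(C)$. Lemma \ref{Lemma-continuity-Gauss-co-volume} then gives $\overline{\gamma}_n(E)\leqslant \tfrac{1}{2}\beta$; in addition $\overline{\gamma}_n(K_i)\geqslant \gamma_n(\lambda B^n \cap C)>0$ so $\overline{\gamma}_n(E)>0$ and $\overline{\gamma}_n(K_i)^{\alpha-1}\to \overline{\gamma}_n(E)^{\alpha-1}$ for $\alpha\geqslant 1$.

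Finally I would pass to the limit against test functions $f\in\mathcal{C}(\Omega_{C^\circ})$ with compact support. Choose any compact $\omega^*\subset\Omega_{C^\circ}$ containing $\operatorname{supp} f$; for $i$ large, $\omega^*\subset \omega_i$ so $\int f\,d\mu_i=\int f\,d\mu$. The Co-area/push-forward identity from the proof of Lemma \ref{Lemma-weak-continuity-Gauss-surface-area-measure} expresses $\int f\,dS_{\gamma_n}(K_i,\cdot)$ as an integral over $\Omega_C$ with integrand supported where $\alpha_{K_i}(u)\in\omega^*$; on this set the same geometric argument as above shows $\overline{h}_{K_i}(\alpha_{K_i}(u))\geqslant c^*\lambda$ and $\rho_{K_i}(u)\leqslant \Lambda/c^*$ uniformly in $i$, so dominated convergence applies verbatim and gives $\int f\,dS_{\gamma_n}(K_i,\cdot)\to \int f\,dS_{\gamma_n}(E,\cdot)$. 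Therefore $\int f\,d\mu = \overline{\gamma}_n(E)^{\alpha-1}\int f\,dS_{\gamma_n}(E,\cdot)$ for all such $f$, and since both measures are finite (Lemma \ref{Lemma-finiteness-Gauss-surface-area-measure} plus the hypothesis on $\mu$), the Riesz representation theorem upgrades this to the measure identity $\overline{\gamma}_n(E)^{\alpha-1}S_{\gamma_n}(E,\cdot)=\mu$, completing the sufficiency direction.
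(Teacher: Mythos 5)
Your proposal is correct and follows the paper's overall scheme (exhaust $\Omega_{C^\circ}$ by compacta, restrict $\mu$, solve on each $\omega_i$ via Theorem \ref{Theorem-solution-compact-set}, get two-sided bounds on $b(K_i)$, apply Lemma \ref{Lemma-Schneider-selection-theorem}, pass to the limit), but two sub-steps are carried out differently. For the lower bound on $b(K_i)$, the paper compares $S_{\gamma_n}(K_i,\omega_1)\leqslant S_{n-1}(K_i,\omega_1)$ and invokes Lemma 9 of Schneider's pseudo-cones paper, while you prove the needed estimate directly: from $\langle x,-v\rangle\geqslant c_0|x|$ on $\omega_0\times C$ and $\overline h_{K_i}\leqslant b(K_i)$ you get $\boldsymbol{\nu}^*_{K_i}(\omega_0)\subset (b(K_i)/c_0)B^n$, hence $S_{\gamma_n}(K_i,\omega_0)\leqslant C'b(K_i)^{n-1}$ (via monotonicity of surface area applied to $K_i\cap(b(K_i)/c_0)B^n$), and since $\overline{\gamma}_n(K_i)^{1-\alpha}\geqslant 1$ for $\alpha\geqslant1$ this gives $b(K_i)\geqslant\lambda>0$; this is a self-contained replacement of the cited lemma and is sound. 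For the limit identification, the paper routes through the auxiliary $C$-determined sets $K^{(\beta)}$, $K_i^{(\beta)}$ together with Lemmas 2 and 13 of Schneider's weighted Minkowski paper, precisely so that the weak-continuity Lemma \ref{Lemma-weak-continuity-Gauss-surface-area-measure} can be applied as stated; you instead test against compactly supported $f$ and rerun the dominated-convergence argument directly, using your uniform bounds $\rho_{K_i}\leqslant\Lambda/c^*$ and $\overline h_{K_i}(\alpha_{K_i}(u))\geqslant c^*\lambda$ on the support of the integrand. This works, but be aware that it quietly uses two facts for the \emph{general} limit pseudo-cone $E$ (not a priori $C$-determined): the co-area/push-forward representation of $\int f\,dS_{\gamma_n}(E,\cdot)$ over $\Omega_C$, and the a.e. pointwise convergences $\rho_{K_i}\to\rho_E$, $\alpha_{K_i}\to\alpha_E$, $\overline h_{K_i}\to\overline h_E$ under Schneider convergence; these are standard and asserted at the same level of detail as in the paper's own proof of Lemma \ref{Lemma-weak-continuity-Gauss-surface-area-measure}, but they are exactly what the paper's detour through $K^{(\beta)}$ avoids having to verify, so a careful write-up should either prove them or fall back on that detour. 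The necessity direction, the positivity $\overline{\gamma}_n(E)>0$, and the final Riesz-representation step are handled as in the paper.
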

\begin{proof}
$(i)$ Let $\mu$ be a nonzero finite Borel measure on $\Omega_{C^\circ}$. We choose a sequence of compact sets $\{\omega_i\}_{i=1}^{+\infty}\subset\Omega_{C^\circ}$ such that
\begin{equation*}
\mu(\omega_1)>0,\ \omega_i\subset\text{int}\,\omega_{i+1}\ \mbox{and}\ \bigcup_{i=1}^{+\infty}\omega_i=\Omega_{C^\circ}.
\end{equation*}
Define $\mu_i=\mu\llcorner\omega_i$ (i.e., $\mu_i(\omega)=\mu(\omega\cap\omega_i)$ for any $\omega\subset\Omega_{C^\circ}$), then $\mu_i$ is a nonzero finite Borel measure concentrated on $\omega_i$.

Applying Theorem \ref{Theorem-solution-compact-set} on $\omega_i$, then there exists $K_i\in\mathcal{K}(C,\omega_i)$ such that
\begin{equation*}
\overline{\gamma}_n(K_i)^{\alpha-1}S_{\gamma_n}(K_i,\cdot)=\mu_i\ \ \mbox{and}\ \ \overline{\gamma}_n(K_i)\leqslant\frac{1}{2}\beta,
\end{equation*}
where $\alpha\geqslant1$. Note that
\begin{equation*}
S_{n-1}(K_i,\omega_1)\geqslant S_{\gamma_n}(K_i,\omega_1)=\overline{\gamma}_n(K_i)^{1-\alpha}\mu_i(\omega_1)\geqslant\mu_i(\omega_1)
=\mu(\omega_1)\triangleq s>0,
\end{equation*}
by Lemma 9 in \cite{Schneider-Pseudo_cones} there is a constant $\lambda>0$ depending only on $C$, $\omega_1$ and $s$, such that
\begin{equation*}
b(K_i)\geqslant\lambda.
\end{equation*}
Meanwhile, by Lemma \ref{Lemma-Upper-bound-estimate}, one has
\begin{equation*}
b(K_i)\leqslant\Lambda
\end{equation*}
for a constant $\Lambda$. Hence, by the Schneider selection theorem, we can say that $K_i$ converges to a $C$-pseudo-cone $K$ as $i\rightarrow+\infty$.

Choose an increasing sequence $\{t_k\}_{k=1}^{+\infty}$ with $t_k\rightarrow+\infty$ as $t\rightarrow+\infty$, then
\begin{equation*}
\lim_{i\rightarrow+\infty}(K_i\cap C^-(t_k))=K\cap C^-(t_k),\ \mbox{for each}\ k.
\end{equation*}
Fixing $i$, we choose a compact set $\beta\subset\Omega_{C^\circ}$ with $\omega_i\subset\text{int}\,\beta$, and choose $k$ so large that $\widetilde{\partial}K^{(\beta)}\subset C^-(t_k)$. Then by Lemma 13 in \cite{Schneider-A_weighted_Minkowski_theorem} we have $\boldsymbol{\nu}^*_K(\omega_i)=\boldsymbol{\nu}^*_{K^{(\beta)}}(\omega_i)$,
and by Lemma 2 in \cite{Schneider-A_weighted_Minkowski_theorem} one also has
\begin{equation*}
\lim_{i\rightarrow\infty}(K_i^{(\beta)}\cap C^-(t_k))=K^{(\beta)}\cap C^-(t_k)).
\end{equation*}
Combining Lemma \ref{Lemma-weak-continuity-Gauss-surface-area-measure} with Lemma \ref{Lemma-continuity-Gauss-co-volume}, we know $S_{\gamma_n}(K_i,\cdot)\llcorner\omega_i$ converges to $S_{\gamma_n}(K^{(\beta)},\cdot)\llcorner\omega_i$ weakly and $\overline{\gamma}_n(K_i)$ converges to $\overline{\gamma}_n(K)$. Thus, we have
\begin{equation*}
\overline{\gamma}_n(K_i)^{\alpha-1}S_{\gamma_n}(K_i,\cdot)\llcorner\omega_i\rightarrow
\overline{\gamma}_n(K)^{\alpha-1}S_{\gamma_n}(K^{(\beta)},\cdot)\llcorner\omega_i\ \ \mbox{weakly}.
\end{equation*}
Since $\overline{\gamma}_n(K_i)^{\alpha-1}S^\Theta_{n-1}(K_i,\cdot)\llcorner\omega_i=\mu\llcorner\omega_i$, we have
\begin{equation*}
\begin{aligned}
\overline{\gamma}_n(K)^{\alpha-1}S_{\gamma_n}(K,\omega')&=\overline{\gamma}_n(K)^{\alpha-1}S_{\gamma_n}(K^{(\beta)},\omega')\\
&=\lim_{i\rightarrow+\infty}\overline{\gamma}_n(K_i)^{\alpha-1}S_{\gamma_n}(K_i,\omega')\\
&=\lim_{i\rightarrow+\infty}\mu_i(\omega')\\
&=\lim_{i\rightarrow+\infty}\mu(\omega')\\
&=\mu(\omega')
\end{aligned}
\end{equation*}
for each Borel set $\omega'\subset\omega_i$. Hence, by $\bigcup_{i=1}^{+\infty}\omega_i=\Omega_{C^\circ}$ we have $\overline{\gamma}_n(K)^{\alpha-1}S_{\gamma_n}(K,\cdot)=\mu$.\\
$(ii)$ Conversely, it is clear by Lemma \ref{Lemma-finiteness-Gauss-surface-area-measure}.
\end{proof}

\vskip 3mm

\end{document}